\documentclass[11pt]{amsart}
\usepackage[T1]{fontenc}
\usepackage{lmodern}
\usepackage[english]{babel}
\usepackage[a4paper, margin=1in]{geometry}
\usepackage{amssymb,amsthm,verbatim,latexsym,amsfonts,lscape}
\usepackage[centertags]{amsmath}
\usepackage{graphicx,youngtab}
\usepackage{rotating}
\usepackage{enumitem}
\usepackage{titlesec}
\usepackage{pgfplots}
\usepackage{tikz}
\pgfplotsset{compat=1.18}
\usetikzlibrary{calc,hobby,patterns,datavisualization}
\usepackage{tabularx}
\usepackage{adjustbox}
\usepackage{indentfirst}
\usepackage{times}
\usepackage{minted}
\usepackage{fancyvrb}
\usepackage{pdfpages}

\newcommand{\F}{\mathbb{F}}
\newcommand{\Z}{\mathbb{Z}}
\newcommand{\disp}{\displaystyle}


\usepackage[numbers,square]{natbib}

\theoremstyle{plain}
\newtheorem{theorem}{Theorem}[section]
\newtheorem{lemma}[theorem]{Lemma}
\newtheorem{corollary}[theorem]{Corollary}
\newtheorem{prop}[theorem]{Proposition}

\theoremstyle{definition}

\newtheorem{ex}[theorem]{Example}

\theoremstyle{remark}
\newtheorem{remark}[theorem]{Remark}

\setcounter{MaxMatrixCols}{20}

\makeatletter
\def\@settitle{
  \begin{center}%
    \bfseries\large\uppercase{\@title}%
  \end{center}%
}
\makeatother

\renewenvironment{abstract}{
  \vspace{1em}
  \begin{list}{}{
    \setlength{\leftmargin}{1cm} 
    \setlength{\rightmargin}{1cm}
  }
  \item[]
  \noindent{ \scshape Abstract.}\hspace{0.3em}\footnotesize
}{
  \end{list}
  \par\vspace{1em}
}

\titleformat{\section}
  {\normalfont\scshape\center\large}{\thesection.}{1em}{}
\titleformat{\subsection}
  {\normalfont\itshape}{\thesubsection.}{1em}{}

\titleformat{name=\section,numberless}
{\normalfont\scshape\center\large}{}{0em}{}

\title{COMPANION MATRICES, PERMUTATIONS, AND LATTICE IDEALS}
\author{Nsibiet E. Udo and Praise Adeyemo}
\date{September 2025}
\subjclass[2020]{Primary 13A15, 06B10; Secondary 20K01, 20D45, 20F28}
\keywords{Monomial matrices, zero-dimensional ideal, finite groups, automorphism groups}

\sloppy
\raggedbottom

\usepackage{hyperref}

\begin{document}

\maketitle

\begin{abstract}
This paper investigates a novel connection between reductions of companion matrices associated with certain binomial ideals \(I_n\) of the polynomial ring \( F[x_1,\dots, x_n]\) over a field \(F\), given by  \begin{align}	
		I_n=\langle x_2x_3\cdots x_n-x_1, x_1x_3\cdots x_n-x_2,\dots, x_1x_2\cdots x_{n-1}-x_{n}\rangle, \label{ab:n1}
		\end{align} and permutation matrices. Specifically, for fixed monomial orders, we observe that the reduced companion matrices yield permutation matrices satisfying group-theoretic relations. We explore the implications of these reductions, their connections to lattice ideals, and characterize the groups generated by these transformations.
\end{abstract}

\section*{Introduction}
Binomial ideals in polynomial rings have gained significant attention due to their rich combinatorial and algebraic properties \citep{Herzog2018,Stanley2016}. The study of their associated quotient algebras offers a profound insight into the interaction between linear algebra, commutative algebra, and combinatorics. For a zero-dimensional ideal \( I_n \subseteq R_n = F[x_1, \dots, x_n] \), where \(F \) is an algebraically closed field of characteristic zero, the finite-dimensional algebra \(A_n = R_n / I_n\) is central to effective algebraic geometry. The matrices \(T_i = T_{x_i} \) representing multiplication by the coordinate functions \(x_i\) relative to a monomial basis, known as companion matrices, play a pivotal role in this context.

In this paper, we study a symmetric family of such ideals \eqref{ab:n1}, which were first introduced in \citep[Proposition~2.1]{Udoetal2025} as reduced complete intersection of colength \( c_n = 1 + (n-2)\cdot 2^{\,n-1} \). The defining generators equate each variable with the product of all others, a symmetry that leads to a highly structured quotient algebra \( A_n \). We analyze the zero-dimensional subscheme defined by \( I_n \), showing that the reduced companion matrices \(P_i = T_{x_i}^{\mathrm{red}}\), obtained by removing the row and column corresponding to the constant term, are permutation matrices in the permutation group \( \mathfrak{S}_{c_n - 1} \) (Theorem~\ref{lem:reduced-companion}).

A key contribution of this paper is the exploration of the group \( G_n \) generated by these permutations. We prove that the matrices commute pairwise (Theorem~\ref{thm:commuting-reduced}), and consequently that \( G_n \) is abelian. In Proposition~\ref{prop:structure_Gn}, we provide its explicit structure, showing that it is isomorphic to \( C_2^{n-2} \times C_{2n-4} \). This identification provides a concrete algebraic model for the behaviour of the companion matrices: the algebra they generate is commutative and is, in fact, isomorphic to the group algebra \( F[G_n] \).

These results not only deepen our understanding of these specific binomial ideals but also highlight their intricate interplay with diverse areas of mathematics. We further establish a connection between \( G_n \) and lattice ideals, demonstrating that the quotient \(\mathbb{Z}^n / L_n\) mirrors the structure of \( G_n \), and that the length of the lattice ideal \( I_{L_n} \) coincides with the order of \( G_n \).

The remainder of the paper is organized as follows. Section~\ref{sec:groebner} is devoted to the commutative algebra of \( I_n \): we compute its Gr\"{o}bner bases in lexicographic order (Proposition~\ref{prop:generating-set}), and construct the monomial basis (Corollary~\ref{cor:monomial-basis}) . In Section~\ref{sec:companion}, we introduce the reduced companion matrices, prove that they are permutation matrices, and analyze the order and characteristic polynomial of the reduced matrices (Proposition~\ref{prop:cycle-structure}). Section~\ref{sec:fintegroup} identifies the finite abelian group structure of \( G_n \) (Proposition~\ref{prop:structure_Gn}) and its \(2 \)-adic split automorphism group structure (Theorem~\ref{thm:structure_aut_Gn}). Finally, Section~\ref{sec:lattices} places the construction in the broader context of lattice ideals and related commutative algebras.

\bigskip

\noindent\textbf{Acknowledgements} We would like to thank M. Dolors Magret and Dominic Joyce for their suggestions, and most especially Bal\'{a}zs Szendr\"{o}i for his careful reading of an earlier version of this manuscript and his many helpful comments on aspects of this paper.

\section{The inhomogeneous ideal}
\label{sec:groebner}

Let \(R_n \) be a polynomial ring over a field \( F \). We consider certain square-free binomial ideals \(I_n\lhd R_n\) defined by \eqref{ab:n1}. These ideals lie in the Hilbert scheme $\text{Hilb}^{c_n}(\mathbb{A}^n)$ with colength $c_n$, where the sequence \( c_n\) corresponds to \citep[A000337]{Sloane2010}, in the OEIS.

\begin{prop}[\cite{Udoetal2025}]\label{prop:colength}
	The number \(c_{n}=1+(n-2)\cdot 2^{\,n-1}\) is the codimension of the zero-dimensional square-free binomial ideal \(I_n \lhd R_n \) of size $n$.
\end{prop}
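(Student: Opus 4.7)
The plan is to count the geometric points of $V(I_n)\subseteq \mathbb{A}^n_{\overline{F}}$ and show each is reduced, so that $\dim_F R_n/I_n$ equals the number of points; throughout I take $n\ge 3$, since for $n=2$ the ideal is not zero-dimensional. First I would locate the points. If $(a_1,\dots,a_n)\in V(I_n)$ has some $a_i=0$, the $i$-th relation forces $\prod_{j\ne i}a_j=0$, so another coordinate $a_j$ vanishes; once two coordinates vanish, the remaining relations force every $a_k=0$. Thus the origin is the unique point with a zero coordinate.

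For solutions with all $a_i\ne 0$, multiplying the $i$-th relation by $a_i$ gives $a_i^2=\prod_{j=1}^n a_j=:P$, independent of $i$. Setting $a:=a_1$ and writing $a_i=\varepsilon_i a$ with $\varepsilon_i\in\{\pm1\}$ and $\varepsilon_1=1$ produces $2^{n-1}$ sign patterns. Substituting back reduces the system to $a^{n-2}=\prod_{j=1}^n\varepsilon_j\in\{\pm1\}$, which has exactly $n-2$ nonzero roots in $\overline{F}$ since $\mathrm{char}\,F=0$. Together with the origin, this yields $1+(n-2)\cdot 2^{n-1}=c_n$ distinct geometric points.

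To upgrade this to the colength, I would verify reducedness via the Jacobian of $f_i=\prod_{j\ne i}x_j-x_i$: $\partial f_i/\partial x_i=-1$ and $\partial f_i/\partial x_k=\prod_{j\ne i,k}x_j$ for $k\ne i$. At the origin this matrix is $-I_n$, hence invertible. At a nonzero point $a_i=\varepsilon_i a$, the identity $P=a^2$ gives $\partial f_i/\partial x_k=P/(a_ia_k)=\varepsilon_i\varepsilon_k$, so with $\varepsilon=(\varepsilon_1,\dots,\varepsilon_n)^T$ the Jacobian equals $\varepsilon\varepsilon^T-2I_n$. Its eigenvalues are $n-2$ (simple, eigenvector $\varepsilon$) and $-2$ (multiplicity $n-1$), so $\det = (n-2)(-2)^{n-1}\ne 0$ for $n\ge 3$ in characteristic zero. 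Every point is therefore a simple zero, and $\dim_F R_n/I_n = \#V(I_n) = c_n$.

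The main obstacle is the uniform handling of the $2^{n-1}(n-2)$ nonzero points in the reducedness step; the outer-product form $\varepsilon\varepsilon^T-2I_n$ of the Jacobian absorbs the sign patterns cleanly and avoids analyzing each of the $2^{n-1}$ branches separately. A cleaner-but-longer alternative would be to compute a lexicographic Gr\"obner basis of $I_n$ (as later done in the paper) and read off the standard monomials, but the geometric route above gives the most direct verification of the precise count $c_n=1+(n-2)\cdot 2^{n-1}$.
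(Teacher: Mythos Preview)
Your argument is correct. One small point worth making explicit: once the Jacobian of the $n$ generators $f_1,\dots,f_n$ is invertible at a point $p\in V(I_n)$, the images of the $f_i$ span $\mathfrak m_p/\mathfrak m_p^2$, so by Nakayama they generate $\mathfrak m_p$ in the local ring and the localization $(R_n/I_n)_p$ is the residue field; this is the precise mechanism by which ``Jacobian invertible $\Rightarrow$ multiplicity one'' works here, and it does not require first knowing that $I_n$ is a complete intersection.

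As for comparison with the paper: the paper does not actually prove Proposition~\ref{prop:colength} at all---it is quoted from \cite{Udoetal2025}. The surrounding section instead recovers the same number $c_n$ by the algebraic route you mention at the end: it exhibits a lexicographic Gr\"obner basis of $I_n$ (Proposition~\ref{prop:generating-set}) and then counts the standard monomials $m(b,T)=x_n^b\prod_{i\in T}x_i$ (Corollary~\ref{cor:monomial-basis}), obtaining $(n-2)2^{n-1}+1$ directly. Your approach is genuinely different: geometric point-counting over $\overline F$ plus the Jacobian criterion for reducedness. What your route buys is a self-contained verification of the colength that does not depend on having guessed the Gr\"obner basis, together with the pleasant structural observation that the Jacobian at every nonzero point has the rank-one-plus-scalar form $\varepsilon\varepsilon^{T}-2I_n$, which makes the eigenvalue computation uniform across all $2^{n-1}$ sign branches. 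What the paper's Gr\"obner approach buys is an explicit monomial basis of $R_n/I_n$, which is exactly what is needed downstream to define and analyze the reduced companion matrices $P_j$; your method proves the dimension but does not by itself produce that basis.
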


We now explore the Gr\"{o}bner basis of our defining ideal \(I_n\) for a fixed monomial order \( \omega \), denoted by \(  S_n=\text{gb}_{\text{Lex}}(I_n) \). Specifically, we consider the ideal \[J_n = \text{in}_{\text{Lex}} (I_n) \triangleright R_n, \] defined by a lexicographic order with \( x_1 > \cdots > x_n \). As a preliminary step, we begin by identifying a substantial set of elements in \(I_n\), which will subsequently be shown to form a Gr\"{o}bner basis. 

\begin{prop}
	The following are all elements of \(I_n\)
	\begin{enumerate}
		\item[(1)] \(x_n^{2n-3}-x_n\).
		\item[(2)] For each \(k\) from \(2\) to \(n-1\):
		\begin{enumerate}
			\item[(i)] \(x_kx_n^{2(n-2)}-x_k\).
			\item[(ii)] \(x_k^2-x_n^2\).
		\end{enumerate}
		\item[(3)] \(x_1-x_2x_3\cdots x_n\).
	\end{enumerate}
\end{prop}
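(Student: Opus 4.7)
\medskip

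\noindent\textbf{Proof plan.} The entire proposition rests on a single identity modulo $I_n$. The plan is to extract that identity first and then use it repeatedly to reduce each of the claimed polynomials to $0 \bmod I_n$.

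\medskip

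\noindent\emph{Step 1: The squaring identity.} Let $g_i = \prod_{j\neq i} x_j - x_i$ denote the $i$th generator of $I_n$ and set $P = x_1 x_2 \cdots x_n$. Multiplying $g_i$ by $x_i$ gives
\[
x_i\, g_i \;=\; P - x_i^2 \;\in\; I_n,
\]
so $x_i^2 \equiv P \pmod{I_n}$ for every $i = 1, \dots, n$. From this, item (2)(ii) is immediate: $x_k^2 - x_n^2 = (x_k^2 - P) - (x_n^2 - P) \in I_n$. Item (3) is also immediate, since $x_1 - x_2 x_3 \cdots x_n = -g_1$.

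\medskip

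\noindent\emph{Step 2: Substitution trick for high powers of $x_n$.} The identity $x_n^2 \equiv x_j^2 \pmod{I_n}$, applied iteratively with distinct $j$'s, yields
\[
x_n^{2(n-2)} \;=\; (x_n^2)^{n-2} \;\equiv\; \prod_{j \in S} x_j^2 \pmod{I_n}
\]
for any choice of $n-2$ distinct indices $S \subseteq \{1,\dots,n\} \setminus \{n\}$. This is the flexible tool I will use for (2)(i) and (1); the choice of $S$ is what makes the two computations collapse.

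\medskip

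\noindent\emph{Step 3: Item (2)(i).} For $k \in \{2,\dots,n-1\}$, choose $S = \{1,\dots,n\} \setminus \{k,n\}$. Then
\[
x_k\, x_n^{2(n-2)} \;\equiv\; x_k \prod_{j\neq k,n} x_j^{2} \;=\; \Big(\prod_{j\neq n} x_j\Big)\Big(\prod_{j\neq k,n} x_j\Big) \;\equiv\; x_n \prod_{j\neq k,n} x_j \;=\; \prod_{j\neq k} x_j \;\equiv\; x_k,
\]
where the first $\equiv$ uses Step 2, the second uses $g_n$, and the last uses $g_k$.

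\medskip

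\noindent\emph{Step 4: Item (1).} Write $x_n^{2n-3} = x_n \cdot x_n^{2(n-2)}$ and use Step 2 with $S = \{2,3,\dots,n-1\}$. Then
\[
x_n^{2n-3} \;\equiv\; x_n \Big(\prod_{j=2}^{n-1} x_j\Big)^{2} \;=\; \Big(\prod_{j\neq 1} x_j\Big)\Big(\prod_{j=2}^{n-1} x_j\Big) \;\equiv\; x_1 \prod_{j=2}^{n-1} x_j \;=\; \prod_{j\neq n} x_j \;\equiv\; x_n,
\]
using $g_1$ and then $g_n$. Hence $x_n^{2n-3} - x_n \in I_n$.

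\medskip

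\noindent There is no real obstacle: the only point requiring a small idea is recognising that $x_i^2 \equiv P$ is a consequence of multiplying $g_i$ by $x_i$, after which everything else is bookkeeping. The mildly non-obvious move is choosing, in each of Steps 3 and 4, exactly which $n-2$ indices to substitute into so that a pair of generators $g_?$ can be applied at the end to return to the desired variable. Once this is spotted the computations are mechanical.
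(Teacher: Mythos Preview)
Your proof is correct. The paper's own proof is the single line ``This is straightforward,'' so you have in fact supplied the details the paper omits; your key identity $x_i^2 \equiv P \pmod{I_n}$ (obtained from $x_i g_i = P - x_i^2$) together with the substitution trick in Step~2 is exactly the natural route, and the computations in Steps~3 and~4 are clean and accurate.
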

\begin{proof}
	This is straightforward.
\end{proof}

\smallskip

\begin{prop} 
	\label{prop:generating-set}
	For \(n \geq 3\), the set\\ \resizebox{\textwidth}{!}{$S_n =\left\{ x_n^{2n-3}-x_n, x_{n-1}x_n^{2(n-2)} - x_{n-1}, x_{n-1}^2- x_n^2,\dots, x_2x_n^{2(n-2)}-x_2, x_2^2-x_n^2, x_1-x_2x_3\cdots x_n \right\}$}\\ is a Gr\"{o}bner basis for \(I_n\) with respect to the lexicographic order \( x_1 > x_2 > \cdots > x_n \). 
\end{prop}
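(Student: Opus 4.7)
My plan is to apply the standard dimension-counting criterion for Gr\"obner bases. The previous proposition gives $S_n \subseteq I_n$, so we have the inclusion of monomial ideals $\langle \mathrm{LT}(S_n)\rangle \subseteq \mathrm{in}_{\mathrm{Lex}}(I_n)$. Combined with the general fact that $\dim_F R_n/\mathrm{in}_{\mathrm{Lex}}(I_n)=\dim_F R_n/I_n$, which equals $c_n$ by Proposition~\ref{prop:colength}, equality of these initial ideals---equivalently, $S_n$ being a Gr\"obner basis---will follow once I verify that $\dim_F R_n/\langle \mathrm{LT}(S_n)\rangle$ also equals $c_n$.

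First I would extract the leading terms under the lex order $x_1 > \cdots > x_n$, namely
\[
x_1,\quad x_k^2\ (2\le k\le n-1),\quad x_k x_n^{2(n-2)}\ (2\le k\le n-1),\quad x_n^{2n-3},
\]
and observe that a monomial $x_1^{a_1}\cdots x_n^{a_n}$ lies outside the monomial ideal they generate exactly when $a_1=0$, each $a_k\in\{0,1\}$ for $2\le k\le n-1$, $a_n\le 2n-4$, and additionally $a_n\le 2n-5$ whenever some $a_k$ with $2\le k\le n-1$ is nonzero. Splitting on whether $(a_2,\dots,a_{n-1})$ is the zero tuple, the count of such standard monomials comes out to
\[
(2n-3) \;+\; (2^{n-2}-1)(2n-4) \;=\; 1+(n-2)\cdot 2^{n-1} \;=\; c_n,
\]
which supplies the dimension I need, and forces $\langle \mathrm{LT}(S_n)\rangle = \mathrm{in}_{\mathrm{Lex}}(I_n)$.

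The hard part---if there is one---is the careful enumeration of standard monomials: once the case split $(a_2,\dots,a_{n-1})=(0,\dots,0)$ versus $\ne(0,\dots,0)$ is set up correctly, the arithmetic collapses to $c_n$ and the dimension comparison closes the argument. A longer alternative would be to apply Buchberger's criterion directly and reduce each $S$-polynomial $S(g,g')$ with $g,g'\in S_n$ modulo $S_n$; this is routine but tedious, and I prefer the colength route since Proposition~\ref{prop:colength} has already done the heavy lifting.
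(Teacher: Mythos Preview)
Your argument is correct and takes a genuinely different route from the paper. The paper proceeds via Buchberger's criterion: it computes the $S$-pairs $S(f_i,f_j)$ of the \emph{original} generators $f_i$ of $I_n$, obtaining $-x_i^2+x_j^2$ (for $i,j>1$) and, after reduction, $x_jx_n^{2(n-2)}-x_j$ (for $i=1$), and observes that these are already in the span of $S_n$; combined with the divisibility of each $\mathrm{Lt}(f_i)$ by a leading term from $S_n$, this is presented as verifying the Gr\"obner property. Your approach instead bypasses all $S$-pair computations by invoking the colength $c_n$ from Proposition~\ref{prop:colength}: once $S_n\subseteq I_n$ is known, the inclusion $\langle\mathrm{LT}(S_n)\rangle\subseteq\mathrm{in}_{\mathrm{Lex}}(I_n)$ is automatic, and your standard-monomial count $(2n-3)+(2^{n-2}-1)(2n-4)=c_n$ forces equality. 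This is cleaner and more robust, since it reuses work already done rather than carrying out case-by-case reductions; the only prerequisite is that Proposition~\ref{prop:colength} be established independently of the present Gr\"obner basis, which it is (it is quoted from \cite{Udoetal2025}). The paper's approach, by contrast, is self-contained in the sense that it does not appeal to the colength, but pays for this with the $S$-pair bookkeeping you explicitly chose to avoid.
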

\begin{proof}
	The ideal \( I_n = \langle f_1, f_2, \dots, f_n \rangle \), where \( f_i = x_1x_2 \cdots x_{i-1} x_{i+1} \cdots x_n - x_i \) for \( i = 1, \dots, n \). It is obvious that for each \(f_i\), \( \text{Lt}(f_i) = x_1x_2\cdots x_{i-1}x_{i+1}\cdots x_n \in I_n \) is divisible by the leading term of an appropriate element in \( S_n \). 
	
	Consider the S-pair \( S(f_i, f_j) \) of any two elements \( f_i \) and \( f_j \) in \( I_n \), defined by
	\[
	S(f_i, f_j) = \text{LCM}(\text{Lt}(f_i), \text{Lt}(f_j))\left\{\frac{1}{\text{Lt}(f_i)} \cdot f_i - \frac{1}{\text{Lt}(f_j)} \cdot f_j\right\},
	\]
	where \( \text{Lt}(f_i) \) is the leading term of \( f_i \) and \( \text{LCM} \) denotes the least common multiple. Now given any two distinct generators \( f_i\) and \(f_j\) where \(i \neq j \), we have
	
	\noindent \textit{Case 1: \( i, j > 1 \).} If \( j \neq i \) and \( i, j > 1 \), then	\( \text{LCM}(f_i,f_j) = x_1\cdots x_n \) and
	\[
	S(f_i,f_j)=(x_1\cdots x_n) \left\{ \frac{1}{x_1x_2 \cdots x_{i-1}x_{i+1}\cdots x_n}f_i - \frac{1}{x_1x_2 \cdots x_{j-1}x_{j+1}\cdots x_n}f_j \right\} 
	\]
	
	\[
	=x_if_i-x_jf_j= -x_i^2+x_j^2
	\]
	\noindent \textit{Case 2: \( i =1 \) or \( j = 1 \).} Without loss of generality, take \( i = 1 \). Then	\( \text{LCM}(f_1,f_j) = - \text{Lt}(f_j) \) and
	\[
	S(f_i,f_j)=- \text{Lt}(f_j) \left\{ \frac{1}{- x_1}f_1 - \frac{1}{ \text{Lt}(f_j)}f_j \right\} 
	\]
	\[
	=(x_2x_3\cdots x_{j-1}x_{j+1}\cdots x_n) f_1 + f_j= x_jx_n^{2(n-2)}-x_j
	\]
	
	Clearly, all S-pairs \( S(f_i, f_j) \) reduce to zero with respect to the elements of \( S_n \), and every \(\text{LT}(f_i)\) in \( I_n \) is reducible by leading terms of some element in \( S_n \). Hence, \( S_n \) is a Gr\"{o}bner basis for \( I_n \).
	
\end{proof}

\allowdisplaybreaks

\begin{corollary}\label{cor:monomial-basis} 
	For \(0 \leq b < 2n-4 \), the monomial \(m(b, T)\) defined as \[m(b,T)=x_n^{b}\prod_{i\in T}x_i \] where \(T\subseteq \{2,3,\dots,n-1\} \) is any subset of size \(k\) with \( 0\leq k \leq n-2 \), and \[ m(b,T)=x_n^{2n-4} \] when \(b=2n-4\) and \(|T|=0\), spans the finite-dimensional quotient ring \(R_n/J_n\).
\end{corollary}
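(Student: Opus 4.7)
The plan is to invoke the standard monomial basis theorem: if $S_n$ is a Gr\"{o}bner basis of $I_n$, then the monomials in $R_n$ that are not divisible by any leading term of an element of $S_n$ project to a spanning set (in fact a basis) of $R_n/J_n$. So I would start by reading off the leading terms from Proposition~\ref{prop:generating-set}: these are $x_1$, the pair $x_k^2$ and $x_k x_n^{2(n-2)}$ for each $k = 2, \dots, n-1$, and finally $x_n^{2n-3}$.

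Next, I would enumerate the monomials that avoid all of these leading terms. Writing a monomial as $x_1^{a_1} \cdots x_n^{a_n}$, the four divisibility conditions respectively force $a_1 = 0$; $a_k \in \{0,1\}$ for each $k \in \{2,\dots,n-1\}$; $a_n \leq 2n-4$; and additionally $a_n \leq 2n-5$ whenever some $a_k = 1$ with $2 \leq k \leq n-1$. Introducing $T = \{k \in \{2,\dots,n-1\} : a_k = 1\}$ and $b = a_n$, such a monomial is precisely $x_n^{b}\prod_{i \in T} x_i$ subject to those constraints.

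Finally, I would match the enumeration to the statement of the corollary: the generic case $0 \leq b < 2n-4$ with $T \subseteq \{2,\dots,n-1\}$ arbitrary of size $0 \leq k \leq n-2$ captures every standard monomial except the single monomial $x_n^{2n-4}$, which is exactly what the supplementary clause $b = 2n-4$, $|T| = 0$ accounts for. Since these monomials span $R_n/J_n$ by Macaulay's basis theorem, the corollary follows. The proof faces no real obstacle beyond careful bookkeeping: the only subtlety is the off-by-one boundary, namely that the highest power $x_n^{2n-4}$ is standard only when no other variable $x_k$ (with $2 \leq k \leq n-1$) appears, a direct consequence of the presence of the leading terms $x_k x_n^{2(n-2)}$ in the Gr\"{o}bner basis.
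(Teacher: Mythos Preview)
Your proposal is correct and follows essentially the same approach as the paper: both identify the claimed set as precisely the standard monomials, i.e.\ those not divisible by any leading term of the Gr\"{o}bner basis $S_n$, and appeal to the standard monomial basis theorem. The paper's proof is much terser (one sentence asserting these are the irreducible monomials, plus a count), whereas you spell out the divisibility constraints explicitly; but the underlying argument is identical.
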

\begin{proof}
	This set of monomials represents all those that cannot be reduced using the generators of the ideal \(J_n\). The number of elements in this basis is \(2^{n-1}\times (n-2)+1 \), corresponding to the choices of \(T\) and \(b\).
\end{proof}

\section{Reduced companion matrices and permutations}
\label{sec:companion}

In this section, we demonstrate that the reduced companion matrices for each point $I_n$ in $\mbox{Hilb}^{c_n}(\mathbb{A}^n)$ are permutation matrices. A permutation matrix $P_{\sigma}$ (where $\sigma \in \mathfrak{S}_n$) is an $n\times n$ matrix obtained by permuting the rows of the $n\times n$ identity matrix, and hence, $P_{\sigma}^T=P_{\sigma}^{-1}$.

\begin{theorem}\label{lem:reduced-companion}
	For some fixed monomial order \( \omega = \mbox{Lex}\) and an arbitrary \(n\), consider the companion matrices $T_{i= 1,\dots, n}$ associated with the square-free binomial ideal $I_n \lhd R_n$. For each \(T_i\), after reducing the matrix by deleting the first row and column, the resulting matrices are permutation matrices \(P_i \in M_{c_n-1}(F) \) for \(i=1, \dots, n\).
\end{theorem}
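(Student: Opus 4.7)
The plan is to use the explicit monomial basis $B$ of $R_n/J_n$ given in Corollary~\ref{cor:monomial-basis} and study the map $\mu_i \colon B \to B$, $m \mapsto x_i m \bmod J_n$, whose matrix in the basis $B$ is exactly $T_i$. The first observation is that every element of the Gr\"obner basis $S_n$ is a binomial of the form ``monomial $-$ monomial'', so the normal form of any monomial modulo $S_n$ is again a single monomial with coefficient $+1$. In particular each product $x_i m$ reduces to a uniquely determined element $\mu_i(m) \in B$, which already shows that every column of $T_i$ has exactly one nonzero entry, equal to $1$.

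Next I would verify that the constant $1$ is never in the image of $\mu_i$, so that the row of $T_i$ indexed by $1$ is identically zero and the reduced matrix $P_i$ still has exactly one $1$ per column. For $i \in \{2,\ldots,n-1\}$ and $m = m(b,T)$, the reduction of $x_i m$ splits into cases according to whether $i \in T$ and the value of $b$, via the rules $x_i^2 \equiv x_n^2$, $x_i x_n^{2(n-2)} \equiv x_i$, and $x_n^{2n-3} \equiv x_n$; in each case the result $\mu_i(m) = m(b', T')$ has either $T' \neq \emptyset$ or $b' \geq 1$, hence differs from $m(0,\emptyset) = 1$. For $i = n$ one argues analogously using $x_n^{2n-3} \equiv x_n$ together with $x_j x_n^{2(n-2)} \equiv x_j$.

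To upgrade this to the permutation property it remains to show that $\mu_i$ induces a bijection on $B \setminus \{1\}$; since $\mu_i(B) \subseteq B \setminus \{1\}$, finiteness reduces this to injectivity on $B \setminus \{1\}$. The unique collision of $\mu_i$ on $B$ is the pair $\{1,\, x_n^{2n-4}\}$, both sent to $x_i$ for $i \in \{2,\ldots,n-1\}$ by the identity $x_i \cdot x_n^{2(n-2)} \equiv x_i$, and both sent to $x_n$ for $i = n$ by $x_n^{2n-3} \equiv x_n$; a case-by-case comparison of the four reduction cases above shows that no other pair of basis monomials can share an image. Finally, for $i = 1$ the Gr\"obner relation $x_1 \equiv x_2 x_3 \cdots x_n$ yields $T_1 = T_2 T_3 \cdots T_n$ in $A_n$, and because the first row of each $T_j$ with $j \geq 2$ vanishes, reduction commutes with products, giving $P_1 = P_2 P_3 \cdots P_n$ as a product of permutation matrices and hence itself a permutation matrix. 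The main obstacle is the injectivity verification: the bookkeeping over the four reduction cases, together with the special role of $x_n^{2n-4}$ as a near-identity element in $A_n$, must be carried out carefully to confirm that $\{1, x_n^{2n-4}\}$ is the only collision of $\mu_i$.
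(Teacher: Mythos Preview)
Your proposal is correct and follows essentially the same route as the paper: a case analysis of $x_j\cdot m(b,T)$ on the monomial basis $\mathcal{B}$, identification of the single collision $\{1,\,x_n^{2n-4}\}\mapsto x_j$, and bijectivity on $\mathcal{B}\setminus\{1\}$. Your separate treatment of $j=1$ via $T_1=T_2\cdots T_n$ and the block structure of the $T_j$ is in fact tidier than the paper's argument, which applies its case (R1) uniformly to all $j\notin T$ with $j\ne n$ and thereby glosses over the fact that $m(b,\,T\cup\{1\})$ is not a basis element when $j=1$.
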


\begin{proof}
	
	From Corollary \ref{cor:monomial-basis}, the generating set for \( I_n \) with respect to lex order yields a basis for \( R_n / I_n \) consisting of monomials
	\[
	\mathcal{B} = \left\{  m(b, T) = x_n^b \prod_{i \in T} x_i \mid T \subseteq \{2, \dots, n-1\},\ 0 \leq b \leq 2n-4 \right\},
	\]
	with the convention that \( x_n^{2n-4} \) is the only monomial when \( T = \emptyset \) and \( b = 2n-4 \). By Proposition \ref{prop:colength}, we have that $\dim R_n/I_n = c_n < \infty $.
	
	For each \( j \;=\; 1,\dots, n \), consider the \( F\)-linear multiplication map \( m_{x_j}: R_n/I_n \rightarrow R_n/I_n \) defined by	
	\[
	m_{x_j}(f) = x_j f\; \bmod{ I_n}.
	\]  In the basis set \(\mathcal{B}\), the action of \( m_{x_j} \) behaves differently depending on whether \(j\) is in \(T\), \(j=n\), or \(j \notin T \).	The structure of \(I_n\) imposes relations that simplify products \( x_j \cdot m(b, T) \) in a uniform way:

	\noindent \textit{(R1) \;\; Case \( j \notin T \) and \( j \neq n \).} If \( j \notin T \) and \( j \neq n \), then	
	\[
	x_j \cdot m(b, T) = x_j \cdot x_n^b \prod_{i \in T} x_i = x_n^b \cdot x_j \prod_{i \in T} x_i \equiv x_n^b \prod_{i \in T \cup \{j\}} x_i = m(b, T \cup \{j\}) \bmod{ I_n}.
	\]
	
	Here, \( m_{x_j}(m(b, T)) \) is another basis element \( m(b, T \cup \{j\}) \).
	
	\noindent \textit{(R2) \;\; Case \( j \in T \).} If \( j \in T \), then	
	\[
	x_j \cdot m(b, T) = x_j \cdot x_n^b \prod_{i \in T} x_i  = x_n^b \cdot x_j^2 \prod_{i \in T \setminus \{j\}} x_i.
	\] Using \( x_j^2 = x_n^2 \) (from \( I_n \)) 
	
	\[
	x_j \cdot m(b, T) \equiv x_n^{b + 2} \prod_{i \in T \setminus \{j\}} x_i = m(b+2, T\setminus \{j\}) \bmod{ I_n} .
	\] 
	
	This results in another basis element, depending on the configuration of \( T \) and the relation with \( I_n \). 
	
	\noindent \textit{(R3)\;\; Case \( j = n \).} If \( j = n \), then	
	\[
	x_n \cdot m(b, T) = x_n^{b+1} \prod_{i \in T} x_i = m(b+1, T) \bmod{ I_n} ,
	\] where the periodicity \( x^{2n-3} = x_n \) imposed by the ideal \( I_n \) ensures that the result remains in \( \mathcal{B} \) even when \(b\) is the limit point (i.e., \( b + 1 \;=\; 2n-3 \)). 	
	
	Clearly, from the relation \( x_n^{2n-3} = x_n \), we deduce that \( x_n^{2n-4} = 1 \), so \( x_n^{2n-4} \equiv 1 \mod I_n \). Therefore, for any \( j \)
	\[
	m_{x_j}(x_n^{2n-4}) = x_j \cdot x_n^{2n-4} \equiv x_j \cdot 1 = x_j  = m_{x_j}(1) \bmod{ I_n}.
	\]
	Thus, \( m_{x_j} \) maps two distinct basis elements \( m(0,\emptyset) \;=\; 1 \) and \( m(2n-4,\emptyset) \;=\;x_n^{2n-4} \) to the same element \( x_j \), so it is not injective on \( \mathcal{B} \). However, restricting to the subset \(\mathcal{B}^{\prime}=\mathcal{B}\setminus \{1\} \), the action of \( x_j \) becomes a bijection. Indeed, the three cases above show that \( m_{x_j} \) maps each basis element \( m(b, T) \in \mathcal{B}' \) to another unique basis element in \( \mathcal{B}' \), and this mapping is reversible. In other words, suppose \( m_{x_j}(m(b,T)) =  m_{x_j}(m(b^{\prime},T^{\prime}))\) in \( \mathcal{B}^{\prime} \), from the three cases above we have that the equality \( m(b,T) = m(b^{\prime},T^{\prime}) \) holds for \( T = T^{\prime} \) and \( b = b^{\prime}\). Thus \( m_{x_j}\) is injective. Moreover, \( m_{x_j}\) is surjective (hence bijective), since it is a map from a finite set to the same finite set.
	
	The  matrix representation of the \( F\)-linear map \(m_{x_j}\) with respect to \(\mathcal{B}^{\prime}\) is a permutation matrix \(P_j \in M_{c_n-1}(F)\), where each entry \( (P_j)_{r\ell} \) corresponds to the coefficient of a basis element in the expansion \[
	m_{x_j}(m_r) = \sum_{\ell} (P_j)_{r\ell} m_\ell, \quad m_r, m_\ell \in \mathcal{B}^{\prime}.
	\]	
	
	By construction, each row and column of \( P_j \) contains exactly one non-zero entry (equal to 1), reflecting the shift in the basis under multiplication by \( x_j \). This matrix corresponds to the resulting matrix \( T_j\) of the linear map \( m_{x_j}\) with respect to \( \mathcal{B} \) upon removing the first row and column (corresponding to the lowest degree basis element - often associated with the constant term in the quotient ring).
	
	Consequently, the reduced companion matrices \( P_j \) are permutation matrices, completing the proof.
\end{proof}

\begin{ex} \label{eq:permutation-matrices}
	For \( n = 3 \), consider the lexicographic order with \( x_1 > x_2 > x_3 \). The ideal \( I_3 \) is generated by
	\[
	f_1 = x_2x_3 - x_1, \quad f_2 = x_1x_3 - x_2, \quad f_3 = x_1x_2 - x_3.
	\]
	The basis \( \mathcal{B} \) for \( R_3 / I_3 \) consists of monomials \( m(b, T) = \displaystyle x_3^b \prod_{i \in T} x_i \), where \( T \subseteq \{2, 3, \dots, n-1\} = \{2\} \) and \( 0 \leq b \leq 2 \). So \( T \) can be either \( \emptyset \) or \( \{2\} \). If \( T = \emptyset \), then \( m(b, \emptyset) = x_3^b \) for \( 0 \leq b < 2n-4 = 2 \), and so \( m(0, \emptyset) = x_3^0 = 1 \), \( m(1, \emptyset) = x_3^1 = x_3 \).  When \( b = 2n-4 = 2 \), \( m(2, \emptyset) = x_3^2 \). Similarly, if \( T = \{2\} \), then \( m(b, \{2\}) = x_3^b \cdot x_2 \) for \( 0 \leq b < 2n-4 = 2 \), and so \( m(0, \{2\}) = x_3^0 \cdot x_2 = x_2 \), \( m(1, \{2\}) = x_3^1 \cdot x_2 = x_2x_3 \).
	 
	 Explicitly, the list of \( m(b, T) \) for \( n = 3 \) is
	\[
	m(0, \emptyset) = 1, \quad m(1, \emptyset) = x_3, \quad m(2, \emptyset) = x_3^2,\quad m(0, \{2\}) = x_2, \quad m(1, \{2\}) = x_2x_3.
	\]
	Therefore, 
	\[
	\mathcal{B} = \{1, x_3, x_2, x_2x_3, x_3^2\}.
	\]
	The multiplication maps \( m_{x_j} \) act (on each \( m(b, T) \) in \(\mathcal{B}\)) as follows
	
	\noindent \textit{For \(x_1 \)-action:}
		\begin{align*}
			m_{x_1}(1) & = x_1 \equiv x_2x_3 \mod I_3\\
		 	m_{x_1}(x_3) & = x_1x_3 \equiv x_2 \mod I_3 \\
		 	m_{x_1}(x_2) & = x_1x_2 \equiv x_3 \mod I_3 \\
		 	m_{x_1}(x_2x_3) & = x_1x_2x_3 \equiv x_1^2 \equiv x_3^2 \mod I_3 \\
		 	m_{x_1}(x_3^2) & = x_1x_3^2 \equiv x_1x_3x_3 \equiv x_2x_3 \mod I_3 
		\end{align*}
	
	\noindent \textit{For \( x_2 \)-action:}
		\begin{align*}
			m_{x_2}(1) & = x_2 \mod I_3 \\
			m_{x_2}(x_3) & = x_2x_3 \mod I_3 \\
			m_{x_2}(x_2) & = x_2^2 \equiv x_3^2 \mod I_3 \\
			m_{x_2}(x_2x_3) & = x_2^2x_3 \equiv x_2 \cdot x_1 \equiv x_3 \mod I_3 \\
			m_{x_2}(x_3^2) & = x_2x_3^2 \equiv x_1 \cdot x_3 \equiv x_2 \mod I_3
		\end{align*}
	
	\noindent \textit{For \( x_3 \)-action:}
		\begin{align*}
			m_{x_3}(1) & = x_3 \mod I_3\\
			m_{x_3}(x_3) & = x_3^2 \mod I_3\\
			m_{x_3}(x_2) & = x_2x_3 \mod I_3\\
			m_{x_3}(x_2x_3) & = x_2x_3^2 \equiv x_1x_3 \equiv x_2 \mod I_3\\
			m_{x_3}(x_3^2) & = x_3^3 \equiv x_3^2 \cdot x_1x_2 \equiv x_1x_2 \equiv x_3 \mod I_3
		\end{align*}
	The matrix representations of \( m_{x_1}, m_{x_2}, m_{x_3} \) with respect to \( \mathcal{B} \) are
	
	\[
	T_1 =\begin{pmatrix}
	0 & 0 & 0 & 1 & 0\\ 
	0 & 0 & 1 & 0 & 0\\ 
	0 & 1 & 0 & 0 & 0\\ 
	0 & 0 & 0 & 0 & 1\\ 
	0 & 0 & 0 & 1 & 0
	\end{pmatrix}, \quad
	T_2 =\begin{pmatrix}
	0 & 0 & 1 & 0 & 0\\ 
	0 & 0 & 0 & 1 & 0\\ 
	0 & 0 & 0 & 0 & 1\\ 
	0 & 1 & 0 & 0 & 0\\ 
	0 & 0 & 1 & 0 & 0
	\end{pmatrix},\quad 
	T_3 =\begin{pmatrix}
	0 & 1 & 0 & 0 & 0\\ 
	0 & 0 & 0 & 0 & 1\\ 
	0 & 0 & 0 & 1 & 0\\ 
	0 & 0 & 1 & 0 & 0\\ 
	0 & 1 & 0 & 0 & 0
	\end{pmatrix} 
	\] 	Removing the first row and column (corresponding to the constant term 1) yields the permutation matrices 
	\[
	P_1 = \begin{pmatrix}
	0 & 1 & 0 & 0\\ 
	1 & 0 & 0 & 0\\ 
	0 & 0 & 0 & 1\\ 
	0 & 0 & 1 & 0
	\end{pmatrix}, \quad
	P_2 = \begin{pmatrix}
	0 & 0 & 1 & 0\\ 
	0 & 0 & 0 & 1\\ 
	1 & 0 & 0 & 0\\ 
	0 & 1 & 0 & 0
	\end{pmatrix},\quad 
	P_3 = \begin{pmatrix}
	0 & 0 & 0 & 1\\ 
	0 & 0 & 1 & 0\\ 
	0 & 1 & 0 & 0\\ 
	1 & 0 & 0 & 0
	\end{pmatrix} 
	\] 

\end{ex}

\begin{ex} 
	For \( n = 4 \), consider the lexicographic order with \( x_1 > x_2 > x_3 > x_4 \). The ideal \( I_4 \) is generated by
	\[
	f_1 = x_2x_3x_4 - x_1, \quad f_2 = x_1x_3x_4 - x_2, \quad f_3 = x_1x_2x_4 - x_3, \quad f_4 = x_1x_2x_3 - x_4.
	\]
	The basis \( \mathcal{B} \) for \( R_4 / I_4 \) consists of monomials \( m(b, T) = \displaystyle x_4^b \prod_{i \in T} x_i \), where \( T \subseteq \{2, 3, \dots, n-1\} = \{2, 3\} \) and \( 0 \leq b \leq 4 \).  Explicitly, 
	\[
	\mathcal{B} = \{1, x_4, x_3, x_2, x_2x_3x_4, x_4^2, x_3x_4, x_2x_4, x_2x_3, x_2x_3x_4^2, x_2x_4^3, x_3x_4^3, x_4^3, x_3x_4^2, x_2x_4^2, x_2x_3x_4^3, x_4^4\},
	\] and this result is consistent with the Macaulay2 output \cite{Grayson2001}. The reduced companion matrix representations of \( m_{x_1}, m_{x_2}, m_{x_3}, m_{x_4} \) with respect to \( \mathcal{B} \) are \( 16 \times 16 \) permutation matrices, which can be realized by implementing the Sage code in Appendix~\ref{sec:appendix-1} for any arbitrary \(n\).
	
\end{ex}

Next, we present some preliminary results about the general properties of our matrix representation of the associated ideal.

\begin{prop}
	\label{prop:derangement}
	Let \( P_j \in M_{c_n-1}(F) \) be the permutation matrix corresponding to the multiplication map \( m_{x_j} \) on \( \mathcal{B}' = \mathcal{B} \setminus \{1\} \), and let \( \sigma_j \in \mathfrak{S}_{c_n-1} \) be the associated permutation. Then \( \sigma_j \) is a derangement. 
	
\end{prop}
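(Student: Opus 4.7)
The plan is to show that $m_{x_j}(m(b,T)) \neq m(b,T)$ for every basis element $m(b,T) \in \mathcal{B}'$ and every $j \in \{1,\dots,n\}$, so that the permutation $\sigma_j$ has no fixed points. The argument is a direct case analysis that simply pushes the three reduction rules (R1), (R2), (R3) from the proof of Theorem~\ref{lem:reduced-companion} one step further: it is not enough that these rules produce some basis element, one must verify it never produces the same one.

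In case (R1), where $j \neq n$ and $j \notin T$, the image is $m(b, T \cup \{j\})$, whose index set strictly contains $T$; the two cannot coincide. In case (R2), where $j \neq n$ and $j \in T$, the image, prior to any further reduction, has index set $T \setminus \{j\}$, which strictly omits $j$; hence the image differs from $m(b,T)$ regardless of the exponent of $x_n$. Note that even when $b+2$ lies in the boundary range forcing a rewrite using $x_n^{2n-3}=x_n$ or $x_k x_n^{2n-4}=x_k$, those reductions alter only powers of $x_n$ and never restore $j$ to the index set, so the $T$-component remains $T \setminus \{j\}$ (or $\emptyset$ if $T = \{j\}$), which is still distinct from $T$.

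The delicate case is (R3), $j = n$, since the $T$-component is preserved and I must compare exponents of $x_n$. I would split on $T$: if $T \neq \emptyset$ then $0 \le b \le 2n-5$, and $m_{x_n}(m(b,T))$ is either $m(b+1,T)$ (when $b+1 \le 2n-5$) or reduces via $x_k x_n^{2n-4} \equiv x_k$ to $m(0,T)$ (when $b = 2n-5$); in both subcases the new exponent differs from $b$. If $T = \emptyset$ and $1 \le b \le 2n-5$ the image is $m(b+1,\emptyset)$, again with distinct exponent, while the singular case $b = 2n-4$ reduces via $x_n^{2n-3}=x_n$ to $m(1,\emptyset)$, which differs from $m(2n-4,\emptyset)$ provided $2n-4 \neq 1$, i.e.\ $n \ge 3$.

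The main obstacle is purely bookkeeping: ensuring that the boundary rewrites forced by the Gröbner basis $S_n$ never by accident return $m_{x_j}$ to its input. Once the case analysis above is tabulated, in every case either the subset $T$ or the exponent $b$ is demonstrably changed, so no basis element in $\mathcal{B}'$ is fixed and $\sigma_j$ is a derangement.
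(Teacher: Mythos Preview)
Your argument is correct and follows the same case analysis as the paper's own proof, checking that each reduction rule (R1)--(R3) alters either the subset $T$ or the exponent $b$; you are in fact more careful than the paper about the boundary exponent rewrites forced by $x_n^{2n-3}=x_n$ and $x_k x_n^{2n-4}=x_k$ in cases (R2) and (R3). The one point both you and the paper gloss over is $j=1$, where rule (R1) does not literally land in the basis (since $1\notin\{2,\dots,n-1\}$) and a further reduction via $x_1\equiv x_2\cdots x_n$ is needed---but the conclusion survives, as the resulting index set becomes the complement $\{2,\dots,n-1\}\setminus T\neq T$.
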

\begin{proof}
	The permutation matrix \( P_j \in M_{c_n-1}(F) \) represents the action of \( x_j \) on \( \mathcal{B}' \), where each basis element \( m_r \in \mathcal{B}' \) is indexed by a pair \( (b, T) \) with \( T \subseteq \{2, \dots, n-1\} \). Each entry \( (P_j)_{k,r} \) of the matrix representation of \( \sigma_j \in \mathfrak{S}_{c_n-1} \) is the coefficient of the monomial \( m_r \in \mathcal{B}^{\prime} \) in \( m_k:=m_{x_j}(m_{r^{\prime}}) \in \mathcal{B}^{\prime}  \). That is,
	\begin{align*}
	(P_j)_{k,r} & = \text{coefficient of}\; m_r\, \text{in}\; (m_k:=x_j\cdot m_{r^{\prime}} \bmod{ I_n})\\ & = \begin{cases}
	 \text{coefficient of}\; m_{\sigma_j(k)},\, & \text{if}\; r=\sigma_j(k)\\ 0, & \text{otherwise}
	\end{cases}\\ (P_j)_{k,r} & = \delta_{r,\sigma_j(k)},
	\end{align*} where \( \disp m_{r} := m(b,T) = x_n^{b} \prod_{i \in T} x_i \), \( 1 \leq r, k \leq c_n-1 \), \( r = b \times 2^{n-2} + t + 1 \), \( 0 \leq b < 2n-4 \) and \( \disp t = \sum_{i=1}^{n-2} t_i2^{i-1} \) is a unique number in the range \( 0 \leq t \leq 2^{n-2}-1 \) such that for each \( a_i \in \{ 2 < 3 < \cdots n-1 \} \), \( t_i = \begin{cases}
	1, \; & \text{if}\; a_i\in T\\ 0, \; & \text{if}\; a_i \notin T
	\end{cases} \). Note that there is a bijection mapping between the index \( r \) and the parameter \( (b, T )\) of the basis elements. Investigating the limits, we find that for the lower bound, since \( b \geq 0 \) and \( t \geq 0 \), then \( r \geq 1 \). For the upper bound, when \( b = 2n-5 \) and \( t = 2^{n-2}-1 \), we have \( r_{\max} =(2n-5) \times 2^{n-2} +(2^{n-2} -1) + 1 = (2n -4) \times 2^{n-2} =(n-2)2^{n-1} =c_n-1 \). The trace of \( P_j \) is the sum of the diagonal entries \( (P_j)_{k,k} \), which correspond to fixed points of \( \sigma_j \).
	
	We show that \( (P_j)_{k,k} = 0 \) for all \( k \). Suppose \( r^{\prime} = b^{\prime} \times 2^{n-2} + t^{\prime} +1 \). Consider the action of \( x_j \) on a basis element \( m_{r'} = m(b', T') \in \mathcal{B}' \).\\
	\noindent {\textit{Case I}: \( j \notin T^{\prime} \), \( j \neq n \).} Then
	\[ 
		m_k := x_n^{b^{\prime}} \prod_{i \in T} x_i = m(b', T' \cup \{j\}) \bmod{ I_n}, \quad T := T^{\prime} \cup \{j\} 
	\] Clearly, \( t> t^{\prime} \), since \( T^{\prime} \subset T \). Therefore, the index \( k \) of the image is strictly greater than \( r' \), so \( k \neq r' \), and so \( (P_j)_{k,k} = 0 \) for all \( k \).
	
	\noindent {\textit{Case II}: \( j \in T^{\prime} \).}  Then
	\[
	 m_k := x_n^{b^{\prime}} \cdot x_j^2 \prod_{i \in T} x_i = m(b' + 2, T' \setminus \{j\}) \bmod{ I_n}, \quad T^{\prime\prime} := T^{\prime} \setminus \{j\} 
	 \] Because \( T^{\prime} \neq T^{\prime\prime} \), therefore \( t^{\prime\prime} \neq t^{\prime} \), and so the change in \( T' \) and the exponent \( b' \) ensures that the index \( k \) of the image differs from \( r' \), which implies that \( (P_j)_{k,k} = 0 \) for all \( k \).
	
	\noindent {\textit{Case III}: \( j = n \).} Then
	\[
	  m_k := x_n^{b'+1} \prod_{i \in T^{\prime}} x_i = m(b' + 1, T') \bmod{ I_n}, \quad b = b^{\prime} + 1  
	 \] The increase in \( b' \) implies \( k > r' \), so \( k \neq r' \), and this ensures that \( (P_j)_{k,k} = 0 \) for all \( k \).
	
	In all cases, \( m_{x_j}(m_{r'}) \neq m_{r'} \), meaning \( \sigma_j(r') \neq r' \) for all \( r' \). Hence, \( \sigma_j \) has no fixed points, and is therefore a derangement.
\end{proof}

\begin{prop}
	\label{prop:cycle-structure} 
	 Let \( P_j \) be the permutation matrix for \( \sigma_j \in \mathfrak{S}_{c_n-1} \), associated with the ideal \( I_n \). Then the permutation \( \sigma_j \) decomposes into exactly \( 2^{n-2} \) disjoint cycles, each of length \( 2(n-2) \). Consequently, the order and characteristic polynomial of \( P_j \) are given by \[  \operatorname{Ord}(P_j) = 2 (n - 2) \quad \text{and} \quad \chi_{P_j}(t) = \left( t^{2 (n-2)} - 1 \right)^{2^{n-2}}. \] 	
\end{prop}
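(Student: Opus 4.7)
The plan is to reduce the cycle-structure question to a period computation for each generator, then read off the order and characteristic polynomial. I would begin by establishing the key identity that $x_n^{2(n-2)}$ acts as the identity operator on the subspace spanned by $\mathcal{B}^{\prime}$ (even though $x_n^{2(n-2)} \not\equiv 1$ in $R_n/I_n$). For $T \neq \emptyset$ one picks any $j \in T$ and applies $x_j \, x_n^{2(n-2)} = x_j$ from $S_n$; for $T = \emptyset$ with $1 \le b \le 2n-5$ one uses $x_n^{2n-3} = x_n$ to rewrite $x_n^{b + 2(n-2)} = x_n^b$; and in the remaining case $T = \emptyset$, $b = 2n-4$, the same identity gives $x_n^{4n - 8} = x_n^{2n-4}$. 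Consequently $\sigma_n^{2(n-2)} = \mathrm{id}$, and $x_k^2 = x_n^2$ together with $x_1 \equiv x_2 \cdots x_n \pmod{I_n}$ yield $\sigma_k^{2(n-2)} = \mathrm{id}$ for $k = 2, \ldots, n-1$ and $\sigma_1^{2(n-2)} = \mathrm{id}$ as well. Every cycle of every $\sigma_j$ therefore has length dividing $2(n-2)$; since $|\mathcal{B}^{\prime}| = c_n - 1 = 2^{n-2} \cdot 2(n-2)$, what remains is to show that every orbit has length \emph{exactly} $2(n-2)$.

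For $\sigma_n$, rule (R3) in the proof of Theorem~\ref{lem:reduced-companion} says $\sigma_n$ sends $m(b,T) \mapsto m(b+1, T)$ keeping $T$ fixed, with the wrap-around given by the preliminary identity when $T \neq \emptyset$ (so $m(2n-5,T) \mapsto m(0,T)$) and by $x_n^{2n-3} = x_n$ when $T = \emptyset$ (so $m(2n-4,\emptyset) \mapsto m(1,\emptyset)$). Each $T \subseteq \{2,\dots,n-1\}$ thus contributes one cycle of length $2(n-2)$, totalling $2^{n-2}$. For $\sigma_k$ with $k \in \{2, \ldots, n-1\}$, rules (R1) and (R2) show that each application of $\sigma_k$ toggles the membership of $k$ in $T$; hence $\sigma_k^\ell(v) = v$ forces $\ell$ even, and for $\ell = 2s$ the relation $x_k^2 = x_n^2$ gives $\sigma_k^{2s} = \sigma_n^{2s}$, which equals the identity iff $(n-2) \mid s$, so the minimal period is $2(n-2)$. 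For $\sigma_1 = \sigma_2 \cdots \sigma_n$, the toggles of the factors combine so that $\sigma_1$ sends $T$ to $\{2,\ldots,n-1\} \setminus T$, a proper change whenever $n \ge 3$; so $\ell$ must again be even, and writing $\ell = 2s$ one gets $\sigma_1^{2s} = \sigma_n^{2s(n-1)}$, which equals the identity iff $(n-2) \mid s(n-1)$. The coprimality $\gcd(n-1, n-2) = 1$ then forces $(n-2) \mid s$, so the minimal period is $2(n-2)$ in this case too.

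Each $\sigma_j$ therefore has $2^{n-2}$ cycles all of length $2(n-2)$, so $\mathrm{Ord}(P_j) = 2(n-2)$; and since the characteristic polynomial of a permutation matrix factors over cycles as $\prod_{C}\bigl(t^{|C|} - 1\bigr)$, one obtains $\chi_{P_j}(t) = \bigl(t^{2(n-2)} - 1\bigr)^{2^{n-2}}$. The main obstacle is the initial identity $x_n^{2(n-2)} \cdot v = v$ for all $v \in \mathcal{B}^{\prime}$, which requires a careful case split and crucially uses that $\mathcal{B}^{\prime}$ excludes the constant term $1$ (on which $x_n^{2(n-2)}$ does \emph{not} act as the identity, as $x_n^{2(n-2)} \cdot 1 = m(2n-4,\emptyset) \neq 1$ in $R_n/I_n$). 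Once this is in place, everything else reduces to routine parity and divisibility bookkeeping.
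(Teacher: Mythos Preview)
Your argument is correct. You establish the global bound $\sigma_j^{2(n-2)}=\mathrm{id}$ from the Gr\"obner relations, then show by a case split on $j=n$, $j\in\{2,\dots,n-1\}$, $j=1$ that every orbit attains this length exactly, using the $(b,T)$-parametrisation to track parity (toggling $k$, or complementing $T$) and the identity $\sigma_k^2=\sigma_n^2$ to reduce to the already-understood $\sigma_n$. The final steps (order and characteristic polynomial from cycle type) are standard.

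The paper takes a different, more structural route. It postpones this proposition until after identifying $R_n/I_{L_n}\cong F[G_n]$ with $G_n\cong C_2^{\,n-2}\times C_{2n-4}$, and then observes that each $P_j$ is the regular representation of the group element $g_j$. The cycle decomposition is then immediate and uniform in $j$: orbits are cosets of $\langle g_j\rangle$, so there are $|G_n|/\operatorname{ord}(g_j)$ of them, each of length $\operatorname{ord}(g_j)=\operatorname{lcm}(2,2n-4)=2(n-2)$. Your approach is more elementary and entirely self-contained within Sections~\ref{sec:groebner}--\ref{sec:companion}, requiring no lattice-ideal or group-algebra machinery; the paper's approach handles all $j$ in one stroke and explains \emph{why} the answer is so clean (it is a regular representation), at the cost of invoking the later material. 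A minor simplification in your $j=1$ case: since $x_1^2\equiv x_n^2\pmod{I_n}$ as well, you could use $\sigma_1^{2}=\sigma_n^{2}$ directly and avoid the $\gcd(n-1,n-2)=1$ detour.
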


We will proceed with the proof of this result using the group-theoretic structure point of view. However, we now introduce our final result in this section by first giving the argument for invariance of the span \( W\).

\begin{lemma}\label{lem:nonconstant-invariant}
	Let $A_n=R_n/I_n$ and let $\mathcal{B}$ be the lex monomial basis described in Corollary~\ref{cor:monomial-basis}, with $\mathcal{B}^\prime:= \mathcal{B}\setminus\{1\}$. For each $1\le j\le n$, multiplication by $x_j$ maps $\mathcal{B}^\prime$ into $\mathcal{B}^\prime$. i.e.,
	\[
	m_{x_j}(\mathcal{B}^\prime)\subseteq \mathcal{B}^\prime.
	\]
	Equivalently, for every nonconstant basis element $m(b,T)\neq 1$,
	the normal form of $x_j\cdot m(b,T)$ is also nonconstant.
	Hence $W:=\mathrm{span}_F(\mathcal{B}^\prime)$ is a common invariant subspace of $A_n$
	for all multiplication operators $m_{x_j}$.
\end{lemma}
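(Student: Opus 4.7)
The plan is to verify, for each $j\in\{1,\dots,n\}$ and each $m(b,T)\in\mathcal{B}^\prime$, that the normal form of $x_j\cdot m(b,T)$ modulo $I_n$ is a single basis element $m(b^\prime,T^\prime)$ with $(b^\prime,T^\prime)\neq(0,\emptyset)$. For $j\in\{2,\dots,n\}$ the three reduction rules (R1), (R2), (R3) of the proof of Theorem~\ref{lem:reduced-companion} already exhibit the normal form: (R1) sends $(b,T)\mapsto(b,T\cup\{j\})$, and the new subset contains $j$, hence is nonempty; (R2) sends $(b,T)\mapsto(b+2,T\setminus\{j\})$, so the new exponent satisfies $b^\prime\geq 2$; and (R3) sends $(b,T)\mapsto(b+1,T)$, so $b^\prime\geq 1$. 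In each of these three subcases one of the two coordinates of the image is strictly positive, ruling out $(b^\prime,T^\prime)=(0,\emptyset)$.

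Next I would dispose of the boundary situations in which the literal formula produced by (R1)--(R3) falls outside the canonical range of Corollary~\ref{cor:monomial-basis}, or in which $m(b,T)$ happens to be the exceptional element $m(2n-4,\emptyset)=x_n^{2n-4}$. These are absorbed by one further rewriting with the remaining Gr\"obner relations $x_n^{2n-3}=x_n$ and $x_ix_n^{2(n-2)}=x_i$ of $S_n$. For instance, if $m(b,T)=x_n^{2n-4}$ then $x_n\cdot m(b,T)\equiv x_n=m(1,\emptyset)$ and $x_i\cdot m(b,T)\equiv x_i=m(0,\{i\})$ for $i\in\{2,\dots,n-1\}$; and whenever (R2) or (R3) forces $b^\prime\in\{2n-4,2n-3\}$ with $T^\prime\neq\emptyset$, one application of $x_ix_n^{2n-4}\equiv x_i$ to some $i\in T^\prime$ lowers the $x_n$-exponent to an admissible value while leaving $T^\prime$ intact. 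Each such rewritten image still has either strictly positive exponent or nonempty subset, so it lies in $\mathcal{B}^\prime$.

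The case $j=1$ does not fit rules (R1)--(R3) and is handled separately using the Gr\"obner relation $x_1\equiv x_2x_3\cdots x_n$. Substituting and then applying $x_i^2=x_n^2$ for each $i\in T$ gives
\[
x_1\cdot m(b,T)\;\equiv\;x_n^{b+1+2|T|}\prod_{i\in\{2,\dots,n-1\}\setminus T}x_i\pmod{I_n},
\]
after which the $x_n$-exponent is reduced into $[0,2n-4]$ using $x_n^{2n-3}=x_n$ (or $x_ix_n^{2n-4}\equiv x_i$ for some $i\in T^{\prime\prime}:=\{2,\dots,n-1\}\setminus T$ when $T^{\prime\prime}\neq\emptyset$), producing a basis element $m(b^\prime,T^{\prime\prime})$. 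To rule out $(b^\prime,T^{\prime\prime})=(0,\emptyset)$, note that if $T\neq\{2,\dots,n-1\}$ then $T^{\prime\prime}\neq\emptyset$, while if $T=\{2,\dots,n-1\}$ then $b+1+2|T|\equiv b+1\pmod{2n-4}$ lies in $[1,2n-4]$, so the reduced exponent $b^\prime\geq 1$.

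The main obstacle, as I see it, is purely organizational: keeping track of the several boundary rewrites and verifying in each of them that at least one of the exponent $b^\prime$ or the subset $T^\prime$ remains strictly positive or nonempty. Once this bookkeeping is laid out, invariance of $W=\mathrm{span}_F(\mathcal{B}^\prime)$ under every $m_{x_j}$, and hence the lemma, follows immediately.
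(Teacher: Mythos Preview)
Your proposal is correct and follows essentially the same approach as the paper, namely invoking the case analysis (R1)--(R3) from the proof of Theorem~\ref{lem:reduced-companion} to conclude that each $m_{x_j}$ sends nonconstant basis monomials to nonconstant ones. Your version is considerably more explicit than the paper's two-line argument, in particular spelling out the $j=1$ case via $x_1\equiv x_2\cdots x_n$ and the boundary rewrites with $x_n^{2n-3}=x_n$ and $x_ix_n^{2n-4}=x_i$, which the paper leaves implicit.
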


\begin{proof}
	From the proof of Theorem~\ref{lem:reduced-companion}, we see from the reductions (R1)--(R3) that for any $m\!\bigl(b,\,T) \neq 1$, $m_{x_j}( m(b,T))$ is nonconstant. Thus, $m_{x_j}$ never sends a nonconstant basis element to $1$, so $m_{x_j}(B^\prime)\subseteq B^\prime$
	for every $j$. Therefore $W=\mathrm{span}_F(\mathcal{B}^\prime)$ is $m_{x_j}$-invariant for all $j$.
\end{proof}

\begin{theorem}
	\label{thm:commuting-reduced}
	 For all $1\le i,j\le n$, the reduced companion matrices $P_i$ and $P_j$ in \( M_{c_n-1}(F) \) commute. That is,
	\[
	[P_i,P_j]=0 .
	\]
\end{theorem}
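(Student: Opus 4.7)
My plan is to deduce the commutativity of the $P_j$ directly from the commutativity of the ambient algebra $A_n=R_n/I_n$, using Lemma~\ref{lem:nonconstant-invariant} to pass to the invariant subspace $W:=\mathrm{span}_F(\mathcal{B}^\prime)$.

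First, I would observe that since $A_n$ is a commutative $F$-algebra, the multiplication operators satisfy $m_{x_i}\circ m_{x_j} = m_{x_j}\circ m_{x_i}$ as endomorphisms of $A_n$ for all $i,j$. Lemma~\ref{lem:nonconstant-invariant} then furnishes $W$ as a common invariant subspace for every $m_{x_j}$, so the restrictions $m_{x_j}|_W$ are well-defined endomorphisms of $W$, and restrictions of commuting operators to a common invariant subspace inherit commutativity: $m_{x_i}|_W\circ m_{x_j}|_W = m_{x_j}|_W\circ m_{x_i}|_W$.

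The remaining step is to identify $P_j$ with the matrix of the restriction $m_{x_j}|_W$ in the basis $\mathcal{B}^\prime$. In the ordered basis $\{1\}\cup\mathcal{B}^\prime$ of $A_n$, the matrix $T_j$ of $m_{x_j}$ has its first column identically zero: for each $m\in\mathcal{B}^\prime$, the normal form of $x_j\cdot m$ again lies in $\mathcal{B}^\prime$ by Lemma~\ref{lem:nonconstant-invariant} and hence has zero coefficient of $1$, while $m_{x_j}(1)=x_j$ itself reduces to a single element of $\mathcal{B}^\prime$. Deleting the first row and column of $T_j$ — the operation that defines $P_j$ — therefore extracts precisely the matrix of $m_{x_j}|_W$ with respect to $\mathcal{B}^\prime$. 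Since commuting endomorphisms are represented by commuting matrices in any basis, we conclude $P_iP_j=P_jP_i$.

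I do not foresee any serious obstacle: the whole argument reduces to the invariance of $W$, which is already supplied by Lemma~\ref{lem:nonconstant-invariant}, together with the elementary fact that $A_n$ is commutative. The only point requiring mild care is verifying that the ``remove the first row and column'' prescription for $P_j$ coincides with the matrix of the restriction to $W$, and this is immediate from the vanishing of the first column of $T_j$; one could alternatively verify it at the level of block matrices, writing $T_j$ in the block form with zero first column and $P_j$ as the lower-right block, and observing that block multiplication reproduces $P_iP_j$ as the lower-right block of $T_iT_j$.
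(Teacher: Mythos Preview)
Your proposal is correct and follows essentially the same route as the paper: both arguments use the commutativity of $A_n$, invoke Lemma~\ref{lem:nonconstant-invariant} to obtain the common invariant subspace $W=\mathrm{span}_F(\mathcal{B}')$, and then identify $P_j$ with the matrix of $m_{x_j}|_W$ by examining the block shape of $T_j$ in the ordered basis $(1,\mathcal{B}')$. The paper phrases the last step via the $(2,2)$-block of the product $T_iT_j=T_jT_i$, which is exactly the block-matrix alternative you mention at the end.
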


\begin{proof}
	Let $A_n=R_n/I_n$ and, for each $k$, let $m_{x_k}:A_n\to A_n$ denote multiplication by $x_k$.
	Because $A_n$ is a commutative $F$-algebra, the operators commute on the whole space. That is, for any \(i, j\) and for any element \(a \in A_n\),
	\[
	m_{x_i}(m_{x_j}(a)) = x_i(x_j a) = (x_i x_j)a = (x_j x_i)a = m_{x_j}(m_{x_i}(a)).
	\] 
    So \( m_{x_i}\,m_{x_j} = m_{x_j}\,m_{x_i} \) on \( A_n \). By Lemma~\ref{lem:nonconstant-invariant}, the subspace
	\[
	W := \mathrm{span}_F(\mathcal{B}^\prime)\subset A_n
	\]
	is \emph{common invariant} for all $m_{x_k}$. Choose the ordered basis \( \mathcal{E}=(\,1,\,\mathcal{B}^\prime\,) \) of $A_n$, so that every $m_{x_k}$ has a block upper–triangular matrix
	\[
	   T_k \;=\;
	\begin{bmatrix}
	\alpha_k & \rho_k \\
	0 & B_k
	\end{bmatrix}
	\]
	with respect to \(\mathcal{E}\), where \( \alpha_k \) is the scalar coefficient of 1 in the normal form of \(m_{x_k}(1)=x_k\) in \(A_n\) (since \(x_k\) is a nonconstant basis monomial, \( \alpha_k =0 \)  ), \( \rho_k \) is the row vector of coordinates of \(x_k\) in the basis \( \mathcal{B}^\prime \), and the lower–left block is $0$ because $W$ is $m_{x_k}$–invariant.
	By definition, the \emph{reduced companion matrix} $P_k$ is the matrix of the restriction $m_{x_k}\!_{\restriction_W}$ in the basis $\mathcal{B}^\prime$, so $P_k=B_k$.
	
	Since $T_iT_j=T_jT_i$ (the $m_{x_k}$ commutes on $A_n$), the $(2,2)$-block of the product gives
	\[
	B_iB_j = B_jB_i .
	\]
	Therefore $P_iP_j=P_jP_i$, i.e.\ $[P_i,P_j]=0$.
\end{proof}

\section{The finite abelian group structure}
\label{sec:fintegroup}
The above leads us in a natural way to the group of transformations generated by these permutation matrices, which form a subgroup of \( \mathfrak{S}_{c_n-1} \). Suppose there are elements \( g_1, \dots, g_n \) of some group \( G_n \lhd \mathfrak{S}_{c_n-1} \) derived from the companion matrices \( T_1, \dots, T_n \) associated with the ideal \(I_n\). Since the companion matrices commute, these group elements must satisfy the following relations

\begin{align}
g_ig_j & = g_jg_i  
\end{align}
Additionally, due to the generating relations of the ideal, we have
\begin{align}
g_1g_2\cdots g_{n-1} & = g_n \nonumber\\ g_1g_2\cdots g_{n-2}g_n & = g_{n-1} \\ \vdots & \nonumber\\ g_2g_3\cdots g_n & = g_1 \nonumber 
\end{align} These two relations lead to a concrete algebraic problem describing the structure of the group \( G_n \).

Define \( G_n \) as \( G_n = \langle g_1, g_2, \dots, g_n \mid g_i g_j = g_j g_i, \; g_1 g_2 \cdots g_{k-1} g_{k+1} \cdots g_n = g_k \text{ for } k=1,\dots,n \rangle \) which forms an abelian group.

\begin{prop}
	\label{prop:structure_Gn}
	The abelian group \(G_n\) defined above is isomorphic to \(C_2^{n-2} \times C_{2n-4}\), where \( C_2 \) and \(C_{2n-4}\) denote cyclic groups of orders 2 and \(2n-4\), respectively.
\end{prop}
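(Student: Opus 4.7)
The plan is to introduce an explicit invertible change of generators that converts the given abelian presentation into a visible direct product. Multiplying the defining relation $\prod_{i\neq k}g_i=g_k$ by $g_k$ shows that $\prod_{i=1}^{n}g_i=g_k^{2}$ for every $k$, so the left-hand side is independent of $k$, and in particular $g_k^{2}g_n^{-2}=1$ for every $k$. Hence each element $h_k:=g_kg_n^{-1}$ with $1\le k\le n-1$ is an involution, and together with $c:=g_n$ these generate $G_n$.

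Next I would rewrite the presentation on $h_1,\dots,h_{n-1},c$. Substituting $g_k=h_kc$ for $k<n$ and $g_n=c$ into the $k=n$ relation $g_1g_2\cdots g_{n-1}=g_n$ and collecting powers of $c$ yields the single identity
\[
h_1h_2\cdots h_{n-1}\,c^{\,n-2}=1.
\]
A direct check (using commutativity and $h_i^{2}=1$) shows that this relation, together with the involutive conditions, implies the remaining $n-1$ defining relations, so
\[
G_n\;\cong\;\bigl\langle h_1,\dots,h_{n-1},c \,\bigm|\, h_i^{2}=1,\; h_1\cdots h_{n-1}c^{\,n-2}=1,\;\text{abelian}\bigr\rangle.
\]

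The last step eliminates $h_{n-1}$ via the product relation, writing $h_{n-1}=h_1\cdots h_{n-2}\,c^{-(n-2)}$ (which is legal since $h_i^{-1}=h_i$ for $i\le n-2$). The residual constraint $h_{n-1}^{2}=1$ then collapses to $c^{\,2n-4}=1$, and plugging the expression for $h_{n-1}$ back in recovers the product relation automatically. Consequently
\[
G_n \;\cong\; \bigl\langle h_1,\dots,h_{n-2},c \,\bigm|\, h_i^{2}=1,\; c^{\,2n-4}=1,\;\text{abelian}\bigr\rangle \;\cong\; C_2^{\,n-2}\times C_{2n-4}.
\]
As a sanity check, the resulting order $|G_n|=2^{n-2}(2n-4)=(n-2)\cdot 2^{n-1}=c_n-1$ matches the size imposed by the embedding $G_n\le\mathfrak{S}_{c_n-1}$ and the colength in Proposition~\ref{prop:colength}.

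The main point requiring care is the claim that the elimination introduces no hidden relations. I would justify this by treating $(g_1,\dots,g_n)\mapsto(h_1,\dots,h_{n-1},c)$ as a unimodular $\mathbb{Z}$-linear substitution on the free abelian group $\mathbb{Z}^{n}$, so every stage presents the same quotient of $\mathbb{Z}^{n}$ by the same relation lattice $L_n$ — only the chosen basis changes. Equivalently, this is a Smith normal form computation on the matrix $J_n-2I_n$ (where $J_n$ is the all-ones matrix), whose eigenvalues $n-2$ (simple) and $-2$ (multiplicity $n-1$) predict invariant factors $1,2,\dots,2,2n-4$, corroborating the asserted decomposition.
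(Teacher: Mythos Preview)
Your argument is correct and follows essentially the same route as the paper: both introduce $h_i=g_ig_n^{-1}$ (your $c$ is the paper's $g_n$), obtain the single product relation $h_1\cdots h_{n-1}g_n^{\,n-2}=1$, and then eliminate $h_{n-1}$ to reach the presentation $\langle h_1,\dots,h_{n-2},g_n\mid h_i^2=1,\ g_n^{2n-4}=1\rangle$. The only cosmetic difference is that the paper derives $g_n^{2n-4}=1$ first by multiplying the $n$ equalities $g_1\cdots g_n=g_k^2$, whereas you recover it afterwards as the residual constraint $h_{n-1}^2=1$; your added remark that the change of generators is unimodular (hence a genuine Tietze move introducing no hidden relations) actually tightens a step the paper leaves implicit, and your Smith-normal-form aside anticipates the lattice computation carried out later in Proposition~\ref{prop:isomorphism-btw-Gn-quotient-Ln}. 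One small caution: eigenvalues of $J_n-2I_n$ only \emph{suggest} the invariant factors---they do not determine them over $\mathbb{Z}$---so that final sentence should be read as heuristic corroboration, not an independent proof.
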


\begin{proof}
	From the presentation of the abelian group 
	\[
	G_n = \left\langle g_1, g_2, \dots, g_n \,\Bigg|\, g_i g_j = g_j g_i, \;\; \prod_{\substack{j=1 \\ j \neq k}}^n g_j = g_k \text{ for } k=1,\dots,n \right\rangle.
	\]
	we have that	
	\begin{align}
	\label{eq:s4n5}
	g_1 g_2 \cdots g_{n} = g_{1}^2 = \cdots = g_n^2. & 
	\end{align}
	 Write \( h_i = g_ig_n^{-1} \) for \( i = 1, \dots, n-1\). Then \( h_i^2 = 1 \). Since \(g_n^2 = g_1g_2\cdots g_{n}\), and by iteratively multiplying the equations in \eqref{eq:s4n5} above, this implies
	\[
	\begin{array}{ll}
	(g_1\cdots g_n)^n = (g_1^2 g_2^2 \cdots g_{n}^2). & \\ g_n^{2n-4} = 1, &	
	\end{array}
	\] The group \( G_n \) is now generated by \(h_1, \dots, h_{n-1}\) and \(g_n\), with the relations
	\[
	\begin{array}{ll}
	h_i^2 = 1 \; \text{for}\; i= 1, \dots, n-1, & \\
	h_1h_2\cdots h_{n-1}g_n^{n-2} = 1, & \\	
	g_n^{2n-4} = 1. & 	
	\end{array}
	\] This defines a subgroup of \( C_2^{n-1} \times C_{2n-4} \). Using the relation \( h_1h_2\cdots h_{n-1}g_n^{n-2} = 1 \), we can express \( h_{n-1}\) as	
	\[h_{n-1} = h_1 h_2 \cdots h_{n-2} g_n^{n-2}.\]
	This now eliminates the dependent generators. Thus, the group \( G_n\) is generated by \( h_1, h_2, \dots, h_{n-2} \) all satisfying \(h_i^2 = 1 \), forming a free abelian group isomorphic to \( C_2^{n-2} \), and \( g_n \) which satisfies \( g_n^{2n-4} = 1 \), generating a cyclic group isomorphic to \( C_{2n-4} \). The relations show that \(G_n\) can be written as \[ G_n = \langle h_1, \dots, h_{n-2} \mid h_i^2 = 1 \rangle \times \langle g_n \mid g_n^{2n-4} = 1 \rangle. \] Since there are no futher dependencies between these generators \( h_1, \dots, h_{n-2} \), \( g_n \), \( G_n\) is isomorphic to the direct product \(C_2^{n-2} \times C_{2n-4}\).
	
\end{proof}

\begin{ex}
	For \(n = 3\), we obtain
	\[ G_3 = \left\langle g_1, g_2, g_3\, \mid g_ig_j=g_jg_i, g_1g_2=g_3, g_1g_3=g_2, g_2g_3=g_1 \right\rangle. \] From this presentation, we deduce that \( g_3 =g_1g_2,  g_2^2 = g_1^2 = e \), where \( e \) denotes the identity element. Thus, \( G_3 \) is isomorphic to \( C_2 \times C_2 \), the direct product of two cyclic groups of order 2, with \(g_1\) and \(g_2\) as independent generators. Moreover, \(g_1g_2=g_3\) represents the last nontrivial element.\\\\ Notice that this group is precisely \(V_4\), a subgroup of \(\mathfrak{S}_{4}\), generated by the three involutions \( (12)(34) \), \( (13)(24) \) and \( (14)(23) \). These involutions correspond to the three matrices \(P_1\), \(P_2\), and \( P_3 \) in Example \ref{eq:permutation-matrices}.
\end{ex}

We now describe the structural decomposition of the automorphism group of the finite abelian group {\(\text{Aut}(G_n)\)}, where the case \(\text{Aut}(G_3)\) is the classical action of \( \mathfrak{S}_3 \) on the Klein four-group \( C_2 \times C_2 \).

\begin{theorem}
\label{thm:structure_aut_Gn}
	For integers \(n \geq 3\). Let
\[
G_n \cong C_2^{\,r}\oplus C_{2n-4},\qquad r:=n-2,
\]
and write $2n-4=2^{s}m$ with $m$ odd ($s=\nu_2(2n-4)$).

\smallskip
\noindent\textbf{(a) Odd $n$ ($s=1$).}
Then the $2$-primary part of $G_n$ is elementary abelian: $G_n^{(2)}\cong C_2^{\,r+1}$ and
\[
\operatorname{Aut}(G_n)\cong \mathrm{GL}(r\!+\!1,2)\times (\Z/(2n-4)\Z)^\times,
\qquad
|\operatorname{Aut}(G_n)|=|\mathrm{GL}(r\!+\!1,2)|\;\phi(2n-4).
\]

\smallskip
\noindent\textbf{(b) Even $n$ ($s\ge2$).}
Write $C_{2n-4}\cong C_{2^{s}}\oplus C_m$ and let $e$ be the unique element of order $2$ in $C_{2^{s}}$.
Then
\[
\operatorname{Aut}(G_n)\cong
\underbrace{\operatorname{Hom}(C_{2^{s}},\,C_2^{\,r})}_{\cong\,C_2^{\,r}}
\rtimes
\Big(
\underbrace{\operatorname{Hom}(C_2^{\,r},\langle e\rangle)\rtimes \mathrm{GL}(r,2)}_{\cong\,C_2^{\,r}\rtimes \mathrm{GL}(r,2)}
\times (\Z/(2n-4)\Z)^\times
\Big),
\]
and
\[
|\operatorname{Aut}(G_n)|=2^{\,2r}\,|\mathrm{GL}(r,2)|\;\phi(2n-4).
\]

\end{theorem}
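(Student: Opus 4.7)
The plan is to reduce via the Chinese Remainder Theorem to the $2$-primary component of $G_n$ and then treat the two parities of $n$ separately. Writing $2n-4=2^s m$ with $m$ odd, I get $G_n\cong (C_2^r\oplus C_{2^s})\oplus C_m$ with coprime summand orders, so
\[
\operatorname{Aut}(G_n)\;\cong\;\operatorname{Aut}(C_2^r\oplus C_{2^s})\times\operatorname{Aut}(C_m)\;=\;\operatorname{Aut}(C_2^r\oplus C_{2^s})\times(\Z/m\Z)^\times.
\]
Because $(\Z/(2n-4)\Z)^\times\cong(\Z/2^s\Z)^\times\times(\Z/m\Z)^\times$ by CRT, the odd-order factor $(\Z/m\Z)^\times$ will later merge with a $(\Z/2^s\Z)^\times$ piece coming out of the $2$-primary analysis into the single factor $(\Z/(2n-4)\Z)^\times$ of the statement.

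In case (a) the assumption $s=1$ collapses the $2$-primary part to the elementary abelian group $C_2^{r+1}$, whose automorphism group is $\mathrm{GL}(r+1,2)$. Since $m$ is odd, $\phi(2n-4)=\phi(2m)=\phi(m)$, and the order formula is immediate.

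In case (b), with $s\ge 2$, I will parametrize each $\phi\in\operatorname{Aut}(C_2^r\oplus C_{2^s})$ by the block data $(M,v,w,a')$ where $M\in\mathrm{GL}(r,2)$, $v\in\operatorname{Hom}(C_{2^s},C_2^r)$, $w\in\operatorname{Hom}(C_2^r,\langle e\rangle)$, and $a'\in(\Z/2^s\Z)^\times$, with $e$ the unique element of order $2$ in $C_{2^s}$. The restriction on $w$ encodes the fact that images of order-$2$ elements must lie in the $2$-torsion $A[2]=C_2^r\oplus\langle e\rangle$. Counting such tuples yields $|\operatorname{Aut}(C_2^r\oplus C_{2^s})|=2^{2r}\,2^{s-1}\,|\mathrm{GL}(r,2)|$, matching the claimed order. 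From the composition law for $\phi_1\circ\phi_2$ I then read off the three building blocks: $\mathrm{GL}(r,2)$ acts on $W=\operatorname{Hom}(C_2^r,\langle e\rangle)$ by precomposition producing the inner semidirect $W\rtimes\mathrm{GL}(r,2)$; the subgroup $D=(\Z/2^s\Z)^\times$ is central and will furnish the $(\Z/2^s\Z)^\times$ factor of $(\Z/(2n-4)\Z)^\times$; and $N=\operatorname{Hom}(C_{2^s},C_2^r)$ is the remaining component normalized by the others.

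The hard part will be the cocycle term $\langle v,w\rangle\cdot 2^{s-1}$ that appears in the $a'$-component of the composition law: it shows that $D$ is central but not a naïve direct summand of $\operatorname{Aut}(C_2^r\oplus C_{2^s})$, so some care is required to identify $N$ as a normal subgroup whose complement is exactly $(W\rtimes\mathrm{GL}(r,2))\times(\Z/2^s\Z)^\times$. One clean route is to show that $N\cdot D$ is the normal subgroup cut out by the equations $M=I$, $w=0$, that $D$ splits off centrally inside it so that $N\cdot D\cong N\times D$, and that $W\rtimes\mathrm{GL}(r,2)$ lifts to a complement via the subgroup $v=0$, $a'=1$. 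Once this splitting is in hand, bundling with the odd-primary factor $(\Z/m\Z)^\times$ via the CRT isomorphism $(\Z/(2n-4)\Z)^\times\cong(\Z/2^s\Z)^\times\times(\Z/m\Z)^\times$ finishes the proof.
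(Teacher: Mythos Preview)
Your CRT reduction to the $2$-primary part and your treatment of case~(a) are correct and coincide with the paper's argument. For case~(b), your block parametrisation $(M,v,w,a')$ and the identification of the cocycle term $\langle w_1,v_2\rangle\cdot 2^{s-1}$ in the $a'$-component of the composition law are also right, and you are correct that this is precisely the delicate point.

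The gap is in your proposed resolution. You correctly obtain $\operatorname{Aut}(G_n^{(2)})\cong (N\times D)\rtimes H$, where $N\times D=\{(I,v,0,a')\}$ is the kernel of restriction to $\Omega_1$ and $H=W\rtimes\mathrm{GL}(r,2)$ lifts via $\{v=0,\,a'=1\}$. But the final rearrangement to $N\rtimes(H\times D)$ fails: the subgroup $N=\{(I,v,0,1)\}$ is \emph{not} normal, since conjugation of $(I,v,0,1)$ by $(I,0,w,1)$ yields $(I,v,0,\,1+\langle w,v\rangle\,2^{s-1})$, which lies in $ND\setminus N$ whenever the pairing is nonzero. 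More decisively, $D$ is not a direct factor of $\operatorname{Aut}(G_n^{(2)})$ at all: the commutator $\big[(I,0,w,1),(I,v,0,1)\big]=(I,0,0,\,1+\langle w,v\rangle\,2^{s-1})$ is a nontrivial element of $D$, so $D$ meets the derived subgroup. For $n=4$ this gives $\operatorname{Aut}(C_2^{\,2}\oplus C_4)^{\mathrm{ab}}\cong C_2$, while the abstract group $C_2^{\,2}\rtimes\big((C_2^{\,2}\rtimes S_3)\times(\Z/4\Z)^\times\big)$ (with the evident action, $W$ and $D$ acting trivially on $N$) has abelianisation $C_2\times C_2$; hence the two groups are not isomorphic. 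The paper's own proof has the same defect in a sharper form: it identifies the kernel of restriction to $\Omega_1$ as the shears $N$ alone (omitting the $(\Z/2^s\Z)^\times$ part) and then asserts the displayed semidirect product without confronting the cocycle. So the obstruction you spotted is genuine, and the structural formula in part~(b), as written, cannot be established by either route; the order count $2^{2r}\,|\mathrm{GL}(r,2)|\,\phi(2n-4)$ is, however, correct.
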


\begin{proof}
Split $C_{2n-4}\cong C_{2^{s}}\oplus C_m$ with $\gcd(2^{s},m)=1$.
No nontrivial homomorphisms exist between the odd part $C_m$ and the $2$-primary part of $G_n$, hence by \citep[Lemma~2.1]{Hillar-Rhea} we have
\[
\operatorname{Aut}(G_n)\cong \operatorname{Aut}\big(G_n^{(2)}\big)\times \operatorname{Aut}(C_m)\cong \operatorname{Aut}\big(G_n^{(2)}\big)\times (\Z/m\Z)^\times.
\]
Thus it suffices to compute $\operatorname{Aut}\big(G_n^{(2)}\big)$.

\smallskip
\emph{Case $s=1$.}
Then $G_n^{(2)}=C_2^{\,r}\oplus C_2\cong C_2^{\,r+1}$, an $(r\!+\!1)$-dimensional $\F_2$-vector space.
Group automorphisms are precisely $\F_2$-linear automorphisms:
$\operatorname{Aut}\big(G_n^{(2)}\big)\cong \mathrm{GL}(r\!+\!1,2)$.
Multiplying by $(\Z/m\Z)^\times\cong (\Z/(2n-4)\Z)^\times$ gives the desired structure and order.

\smallskip
\emph{Case $s\ge2$.}
Now $G_n^{(2)}=C_2^{\,r}\oplus C_{2^{s}}$.
Let $e\in C_{2^{s}}$ be the unique element of order $2$ (the $2^{s-1}$-st power of a generator).
It is characteristic, hence fixed by any automorphism.
On the $2$-torsion layer
\[
\Omega_1\big(G_n^{(2)}\big)=C_2^{\,r}\oplus \langle e\rangle\cong \F_2^{\,r}\oplus \F_2,
\]
the induced linear action must fix the line $\langle e\rangle$, so the image of the restriction map
$\operatorname{Aut}\big(G_n^{(2)}\big)\twoheadrightarrow \operatorname{Aut}\big(\Omega_1(G_n^{(2)})\big)$
is the stabilizer of $\langle e\rangle$ in $\mathrm{GL}(r\!+\!1,2)$, which is
\[
\operatorname{Stab}(\langle e\rangle)\cong \operatorname{Hom}(C_2^{\,r},\langle e\rangle)\rtimes \mathrm{GL}(r,2)\cong C_2^{\,r}\rtimes \mathrm{GL}(r,2).
\]
Its kernel consists of automorphisms acting trivially on $\Omega_1$, i.e. “shears’’
$f\in\operatorname{Hom}(C_{2^{s}},C_2^{\,r})\cong C_2^{\,r}$ sending $(v,c)\mapsto (v+f(c),c)$.
This kernel is normal, and units in $(\Z/2^{s}\Z)^\times$ act by precomposition.
Therefore
\[
\operatorname{Aut}\big(G_n^{(2)}\big)\cong
\operatorname{Hom}(C_{2^{s}},C_2^{\,r})\rtimes\big(\operatorname{Hom}(C_2^{\,r},\langle e\rangle)\rtimes \mathrm{GL}(r,2)\big),
\]
and restoring the odd part yields the stated semidirect product with $(\Z/(2n-4)\Z)^\times$.
Taking orders gives $|\operatorname{Aut}(G_n)|=2^{2r}\,|\mathrm{GL}(r,2)|\,\phi(2n-4)$.
\end{proof}

\begin{ex}
	For \(3 \leq n\leq 10 \), the orders of $\operatorname{Aut}(G_n)$ are presented in the table below
	
	\begin{adjustbox}{width=\textwidth}
		\begin{tabular}{llll}
			\(n\) & \(G_n\) & \(\text{Aut}(G_n)\)  & \( |\operatorname{Aut}(G_n)| \) \\
			3 & \(C_2 \times C_2\) & \(\mathrm{GL}(2,2) \cong S_3\) &  6 \\
			4 & \(C_2^2 \times C_4\) & \( C_2^{\,2} \rtimes\big(\,(C_2^{\,2}\rtimes S_3)\times (\Z/4\Z)^\times\big) \cong  C_2^3 \rtimes  S_4\) & 192 \\
			5 & \(C_2^3 \times C_6\) & \((\Z/6\Z)^\times\times \mathrm{GL}(4,2)\cong C_2 \times A_8\) & 40320\\
			6 & \(C_2^4 \times C_8 \) & \( C_2^{\,4}\rtimes\big(\,(C_2^{\,4}\rtimes \mathrm{GL}(4,2))\times (\Z/8\Z)^\times\big) \cong C_2 \times (C_2^5 \rtimes (C_2^4 \rtimes A_8))\) & 20643840\\
			7 & \(C_2^5 \times C_{10}\) & \( (\Z/10\Z)^\times\times \mathrm{GL}(6,2)\cong C_4 \times \mathrm{PSL}(6,2)\) & 80634839040\\
			8 & \(C_2^6 \times C_{12}\) & \( C_2^{\,6}\rtimes\big(\,(C_2^{\,6}\rtimes \mathrm{GL}(6,2))\times (\Z/12\Z)^\times\big) \cong C_2 \times (C_2^7 \rtimes (C_2^6 \rtimes \mathrm{PSL}(6,2)))\) & 330280300707840 \\
			9 & \(C_2^7 \times C_{14}\) & \( (\Z/14\Z)^\times\times \mathrm{GL}(8,2) \cong C_6 \times \mathrm{PSL}(8, 2)\) & 32088382615270195200\\
			10 & \(C_2^8 \times C_{16}\) & \(C_2^8 \rtimes (( C_2^8 \rtimes \mathrm{GL}(8, 2) )\times (\Z/16\Z)^\times)\) & 2803925657432463350169600  
			
		\end{tabular}
	\end{adjustbox}

\medskip
\noindent
We already know that over $\F_2$, $\mathrm{GL}(k,2)=\mathrm{SL}(k,2)$ and has trivial center for $k\ge2$, so
$\mathrm{GL}(k,2)\cong \mathrm{PSL}(k,2)$. Also $|\,\mathrm{GL}(r,2)\,|=\prod_{j=0}^{r-1}(2^{r}-2^{j})$
and hence $|\,\mathrm{GL}(r\!+\!1,2)\,|=\prod_{j=0}^{r}(2^{r+1}-2^{j})$.

\end{ex}

\section{The lattice ideal and related algebras}
\label{sec:lattices}
In this section, we explore the connections between the group \(G_n\) and lattice ideals. We describe the subgroup \( L_n \) of \( \Z^n \), its associated lattice ideal 
\[
	I_{L_n} = \langle x_i - \prod_{j \neq i}x_j \ (1 \leq i \leq n), x_n^{2n-4} - 1, x_i^2 - x_n^{2 \cdot \text{H}(n-3)} \ (1 \leq i \leq n-1) \rangle, 
\]
and show that the quotient \( \Z^n/ L_n \) corresponds to the group \( G_n \). 

\begin{prop}
	\label{prop:isomorphism-btw-Gn-quotient-Ln}
	The group \( G_n \) is isomorphic to \( \mathbb{Z}^n / L_n \).
\end{prop}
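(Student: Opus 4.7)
The plan is to construct a pair of mutually inverse homomorphisms between $G_n$ and $\mathbb{Z}^n/L_n$. Specifically, I would define $\phi:\mathbb{Z}^n/L_n\to G_n$ by $\bar e_i\mapsto g_i$ and $\psi:G_n\to\mathbb{Z}^n/L_n$ by $g_i\mapsto\bar e_i$, where $\{e_1,\dots,e_n\}$ is the standard basis of $\mathbb{Z}^n$. Since both targets are abelian, the universal properties of the free abelian group $\mathbb{Z}^n$ and of the presentation of $G_n$ recorded just before Proposition~\ref{prop:structure_Gn} reduce well-definedness to checking that the defining relations of each source are satisfied in the target. Once both maps are known to be well defined, they act as inverses on generators and therefore are mutually inverse.

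For $\phi$, I would verify that each generator of $L_n$ maps to the identity in $G_n$. Reading the generators off $I_{L_n}$, there are three families to handle. First, the vectors $e_k-\sum_{j\ne k}e_j$ map to $g_k\bigl(\prod_{j\ne k}g_j\bigr)^{-1}$, which is trivial by the primary defining relations of $G_n$. Second, $(2n-4)e_n$ maps to $g_n^{2n-4}$, which equals $1$ by the identity established in the proof of Proposition~\ref{prop:structure_Gn}. Third, the vectors $2e_i-2H(n-3)\,e_n$ map to $g_i^2\,g_n^{-2H(n-3)}$, which is trivial because equation~(\ref{eq:s4n5}) gives $g_i^2=g_n^2$, from which the required power of $g_n$ follows by iterating the main relations.

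For $\psi$, the task is to check that the defining relations of $G_n$ hold in $\mathbb{Z}^n/L_n$ when each $g_i$ is replaced by $\bar e_i$. Commutativity is automatic because $\mathbb{Z}^n/L_n$ is abelian; the multiplicative relations $\prod_{j\ne k}g_j=g_k$ become $\sum_{j\ne k}\bar e_j=\bar e_k$, which hold precisely because the vectors $e_k-\sum_{j\ne k}e_j$ lie in $L_n$. Composing then shows that $\psi\circ\phi$ and $\phi\circ\psi$ agree with the identity on generators and therefore globally.

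The step I expect to be most delicate is the internal consistency between the ideal generators listed for $I_{L_n}$ and the lattice $L_n$ itself: one must confirm that the ``secondary'' generators $(2n-4)e_n$ and $2e_i-2H(n-3)e_n$ are redundant consequences of the primary generators $e_k-\sum_{j\ne k}e_j$, so that the verification above is not circular. This follows from the derivations already carried out in the proof of Proposition~\ref{prop:structure_Gn}, but as a robustness check I would compute the Smith normal form of the $n\times n$ relation matrix $\bigl[\,e_k-\sum_{j\ne k}e_j\,\bigr]_{k=1}^{n}$ and verify that its invariant factors are $(\underbrace{2,\dots,2}_{n-2},\,2n-4)$, matching the decomposition $G_n\cong C_2^{n-2}\times C_{2n-4}$ of Proposition~\ref{prop:structure_Gn}; this independent computation provides a direct confirmation of the claimed isomorphism at the level of abstract abelian groups.
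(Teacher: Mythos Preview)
Your argument via mutually inverse homomorphisms is correct and takes a different route from the paper. The paper proceeds by computing the Smith normal form of the $n\times n$ relation matrix $R=2\boldsymbol{I}_n-Q_n$ (the matrix whose columns are the primary vectors $v_k=e_k-\sum_{j\ne k}e_j$): after an explicit unimodular reduction it obtains $\operatorname{SNF}(R)=\operatorname{diag}(1,2,\dots,2,2(n-2))$, and then matches this against the structure of $G_n$ from Proposition~\ref{prop:structure_Gn}. Your approach is more direct and avoids the matrix computation entirely; it also has the virtue of treating all listed generators of $L_n$ on an equal footing rather than tacitly assuming the secondary ones are redundant. Conversely, the paper's SNF computation determines the invariant factors of $\mathbb{Z}^n/L_n$ independently of the presentation of $G_n$, so it doubles as an alternative derivation of Proposition~\ref{prop:structure_Gn}. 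Two small remarks: your circularity worry is misplaced---the identities $g_n^{2n-4}=1$ and $g_i^2=g_n^2$ needed to kill the secondary generators under $\phi$ are derived in the proof of Proposition~\ref{prop:structure_Gn} purely from the presentation of $G_n$, not from the isomorphism you are proving; and in your robustness check the SNF of an $n\times n$ matrix has $n$ invariant factors, so the correct list is $(1,\underbrace{2,\dots,2}_{n-2},\,2n-4)$ with a leading~$1$.
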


\begin{proof}
	The lattice \( L_n \subset \mathbb{Z}^n \) is generated by the vectors
	\begin{equation}
	    \begin{cases}
	v_i = 2e_i - \sum_{j=1}^n e_j & (1 \leq i \leq n), \\
	v_{n+1} = (2n - 4)e_n, \\
	v_{i+n+1} = 2e_i - 2H(n-3)e_n & (1 \leq i \leq n-1),
	\end{cases} 
    \label{eqn:s5n1}
	\end{equation}
	where \( H(n-3) \) is the Heaviside step function, \( e_1, \dots, e_n\) are canonical basis for \( \Z^n \). Since \( L_n \) has rank \( n \), we may select \( n \) linearly independent generators, such as \( \{v_1, \dots, v_n\} \). These form the columns of the relation matrix
	\[
	R := 2\boldsymbol{I}_n - Q_n \in M_n(\Z),
	\]
	where \( Q_n \) is the all-ones matrix, and \( \boldsymbol{I}_n\) the \( n \times n \) identity. Explicitly,
	\[
	R = 
	\begin{pmatrix}
	1 & -1 & -1 & \cdots & -1 \\
	-1 & 1 & -1 & \cdots & -1 \\
	-1 & -1 & 1 & \cdots & -1 \\
	\vdots & \vdots & \vdots & \ddots & \vdots \\
	-1 & -1 & -1 & \cdots & 1
	\end{pmatrix}.
	\]
	
	The group is isomorphic to the cokernel of the linear map \( R: \mathbb{Z}^n \to \mathbb{Z}^n \), i.e., \( G_n \cong \mathbb{Z}^n / \operatorname{im}(R) \). To determine the structure of \( \mathbb{Z}^n / L_n \), we compute the Smith normal form (SNF) of \( R \). The invariant factors of \( R \) will reveal the group structure and the index \( [\mathbb{Z}^n : L_n] = |\det(R)| \).
	
	Define the all-ones vector \( \mathbf{1} \) and the vector \( f \) as
	\[
	\mathbf{1} = \sum_{i=1}^n e_i = (1,1,\dots,1)^T, \quad f = \sum_{i=2}^n e_i = (0,1,1,\dots,1)^T \in \mathbb{Z}^n,
	\]
	and consider the unimodular matrices
	\[
	U = \boldsymbol{I}_n + f e_1^T, \quad V = \boldsymbol{I}_n + e_1 f^T.
	\]
	These matrices correspond to sequences of elementary row and column operations: subtracting the first row/column from the others, then adding twice the first row/column to the others. One may verify that \( \det(U) = \det(V) = 1 \), since \( e_1^T f = 0 \).
	
	Now compute the product
	\[
	\begin{aligned}
	URV &= (I + f e_1^T)(2I - \mathbf{1}\mathbf{1}^T)(I + e_1 f^T) \\
	&= (I + f e_1^T)\left[2(I + e_1 f^T) - \mathbf{1}\mathbf{1}^T(I + e_1 f^T)\right] \\
	&= (I + f e_1^T)\left[2I + 2e_1 f^T - \mathbf{1}\mathbf{1}^T - \mathbf{1}(e_1^T \mathbf{1}) f^T\right] \\
	&= (I + f e_1^T)\left[2I + (2e_1 - \mathbf{1})f^T - \mathbf{1}\mathbf{1}^T\right].
	\end{aligned}
	\]
	Multiplying out and simplifying using the identities \( e_1^T f = 0 \), \( e_1^T \mathbf{1} = 1 \), and \( f^T \mathbf{1} = n-1 \), we obtain
	\[
	URV = 2I - e_1 e_1^T - 2f f^T = \begin{pmatrix} 1 & \mathbf{0}^T \\ \mathbf{0} & M' \end{pmatrix},
	\]
	where \( M' = -2(Q_{n-1} - \boldsymbol{I}_{n-1}) \).
	
	Now, the matrix \( K = Q_{n-1} - \boldsymbol{I}_{n-1} \) has eigenvalues \( n-2 \) (with eigenvector \( \mathbf{1} \)) and \( -1 \) (with multiplicity \( n-2 \)). Hence,
	\[
	\det(K) = (n-2)(-1)^{n-2}.
	\]
	Since all entries of \( K \) are 0 or 1, the greatest common divisor of its entries is 1, so the first invariant factor of \( K \) is 1. The invariant factors \( d_1 \mid d_2 \mid \cdots \mid d_{n-1} \) must satisfy \( d_1 \cdots d_{n-1} = |\det(K)| = n-2 \), and since \( d_1 = 1 \) and $K$ is an integer matrix of full rank $n-1$, it follows that
	\[
	\operatorname{SNF}(K) = \operatorname{diag}(1, 1, \dots, 1, n-2),
	\]
	with the entry \(1\) occurring \( n-2 \) times. Therefore,
	\[
	\operatorname{SNF}(M') = \operatorname{diag}(2, 2, \dots, 2, 2(n-2)),
	\]
	with \( 2 \) repeated \(n - 2\) times, and combining with the leading 1 gives
	\[
	\operatorname{SNF}(R) = \operatorname{diag}(1, 2, 2, \dots, 2, 2(n-2)).
	\]
	
	Thus,
	\[
	\mathbb{Z}^n / L_n \cong \mathbb Z^n /  \operatorname{im}(R) \cong (\mathbb Z/2 \mathbb Z)^{\,n-2}  \oplus \mathbb{Z}/(2(n-2))\mathbb Z,
	\]
	with \( n-2 \) copies of \( \mathbb{Z}/2 \), and
	\[
	|\mathbb{Z}^n / L_n| = |\det(R)| = 1 \cdot 2^{n-2} \cdot 2(n-2) = (n-2)2^{n-1}.
	\]
	This completes the proof. 
	
\end{proof}

\begin{prop}
	\label{prop:radical_ILn}
	The lattice ideal \(I_{L_n} \) is radical and zero-dimensional. 
\end{prop}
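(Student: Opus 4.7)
The plan is to establish both assertions by producing, for each variable $x_i$, a monic squarefree univariate polynomial lying in $I_{L_n}$. Given such polynomials, zero-dimensionality is immediate because $F[x_1,\dots,x_n]/I_{L_n}$ is spanned by monomials of degree less than $2n-4$ in each $x_i$, while radicality follows from Seidenberg's lemma: over a perfect (in particular, characteristic-zero) field, a zero-dimensional ideal containing a squarefree univariate polynomial in each variable is automatically radical.

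The variable $x_n$ is handled by the given generator itself: $x_n^{2n-4}-1$ is univariate and, since $F$ is algebraically closed of characteristic zero, its roots are the $2n-4$ distinct $(2n-4)$-th roots of unity, so it is squarefree. For $1\le i\le n-1$ with $n\ge 4$ (so $H(n-3)=1$), I would combine $x_i^2-x_n^2\in I_{L_n}$ with $x_n^{2n-4}-1\in I_{L_n}$ via the telescoping identity $a^{n-2}-b^{n-2}=(a-b)\sum_{k=0}^{n-3}a^{n-3-k}b^{k}$ applied to $a=x_i^2$ and $b=x_n^2$, yielding
\[
x_i^{2n-4}-x_n^{2n-4}=(x_i^2-x_n^2)\sum_{k=0}^{n-3}x_i^{2(n-3-k)}x_n^{2k}\in I_{L_n},
\]
and then adding $x_n^{2n-4}-1$ produces $x_i^{2n-4}-1\in I_{L_n}$. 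This polynomial is squarefree for the same root-of-unity reason. The boundary case $n=3$ (where $H(0)=0$) is handled directly, since the defining generators already include $x_i^2-1$ for $i=1,2$ and $x_3^2-1$, all of which are squarefree univariates.

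Applying Seidenberg's lemma to the witnesses $p_i(x_i):=x_i^{2n-4}-1\in I_{L_n}$ then delivers both claims in one stroke: the presence of a monic univariate polynomial in each $x_i$ forces the quotient to be a finite-dimensional $F$-algebra, and the squarefreeness upgrades finite-dimensionality to radicality. The only step that deserves any care is the explicit ideal-membership verification $x_i^{2n-4}-1\in I_{L_n}$, which is precisely the short telescoping calculation displayed above and presents no conceptual obstacle.
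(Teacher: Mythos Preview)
Your argument is correct. Producing the squarefree univariate witnesses $x_i^{2n-4}-1\in I_{L_n}$ via the telescoping identity (and the $n=3$ degeneration) is valid, and Seidenberg's lemma over a perfect field then yields both zero-dimensionality and radicality at once.

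The paper proceeds differently. For radicality it invokes the general Eisenbud--Sturmfels fact that a lattice ideal (generated by pure-difference binomials) over a field of characteristic zero is radical; for zero-dimensionality it explicitly parametrises and counts the points of $V(I_{L_n})$, obtaining $|V(I_{L_n})|=(n-2)2^{n-1}=c_n-1$. Your route is more self-contained---it avoids appealing to the structural theory of binomial ideals and delivers both conclusions from a single elementary membership calculation. The paper's route, on the other hand, yields as a byproduct the exact cardinality of the variety, which is used downstream (e.g.\ in matching $\operatorname{len}(R_n/I_{L_n})$ with $|G_n|$ and in the decomposition $I_n=I_{L_n}\cap\mathfrak{m}$). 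So your proof is a clean substitute for the proposition itself, but the explicit point count would still need to be supplied elsewhere for the later results.
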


\begin{proof}
	The lattice ideal \( I_{L_n} \subset R_n \) is generated by binomials of the form 
	\begin{alignat}{2}
	\label{eq:main}
	f_i = x_i - \prod_{j \neq i} x_j \quad (1 \leq i \leq n), & & \quad f_{n+1} = x_n^{2n-4} - 1, &\quad f_{i+n+1} = x_i^2 - x_n^{2H(n-3)} \quad (1 \leq i \leq n-1).  \end{alignat} Since \( F \) is algebraically closed of characteristic zero, and \( I_{L_n} \) is generated by binomials without monomial factors, it is radical.
	
	We now show that \( I_{L_n} \subset R_n \) is zero-dimensional, i.e., \( \dim_k R_n / I_{L_n} < \infty \). The variety \( V(I_{L_n}) \) consists of points \( (x_1, \dots, x_n) \in F^n \) satisfying the system
	\[
	x_i = \prod_{j \neq i} x_j \quad (1 \leq i \leq n), \qquad x_n^{2n-4} = 1, \qquad x_i^2 = x_n^{2H(n-3)} \quad (1 \leq i \leq n-1).
	\]
	
	We consider two cases.
	
	\noindent {\itshape Case 1: \( n = 3 \).}
	Here, \( H(0) = 0 \), so \( x_i^2 = 1 \) for \( i = 1, 2, 3 \). Hence, \( x_i = \pm 1 \). The relations \( x_1 = x_2 x_3 \), \( x_2 = x_1 x_3 \), and \( x_3 = x_1 x_2 \) imply
	\[
	x_1 x_2 x_3 = x_1^2 = x_2^2 = x_3^2 = 1.
	\]
	Thus, the product \( x_1 x_2 x_3 = 1 \), and each \( x_i^2 = 1 \). The only solutions are
	\[
	(1,1,1), \quad (1,-1,-1), \quad (-1,1,-1), \quad (-1,-1,1),
	\]
	yielding \( |V(I_{L_3})| = 4 \).
	
	\noindent {\itshape Case 2: \( n > 3 \).} 
	Now \( H(n-3) = 1 \), so \( x_i^2 = x_n^2 \) for \( 1 \leq i \leq n-1 \). Thus, \( x_i = \varepsilon_i x_n \) with \( \varepsilon_i = \pm 1 \). Also, \( x_n^{2n-4} = 1 \), so \( x_n \) is a \( (2n-4) \)th root of unity.
	
	From \( x_n = \prod_{j=1}^{n-1} x_j \), we substitute \( x_j = \varepsilon_j x_n \) to obtain
	\[
	x_n = \left( \prod_{j=1}^{n-1} \varepsilon_j \right) x_n^{n-1}.
	\]
	Since \( x_n \neq 0 \) (as \( x_n^{2n-4}=1 \)), divide both sides by \( x_n \)
	\[
	1 = \left( \prod_{j=1}^{n-1} \varepsilon_j \right) x_n^{n-2}.
	\]
	Let \( \varepsilon = \prod_{j=1}^{n-1} \varepsilon_j \in \{ \pm 1 \} \). Then \( \varepsilon = x_n^{-(n-2)} \). Note that \( x_n^{-(n-2)} \) is also a \( (2n-4) \)th root of unity, and in fact \( (x_n^{-(n-2)})^2 = x_n^{-2(n-2)} = 1 \) since \( x_n^{2n-4} = 1 \). Hence, \( x_n^{-(n-2)} = \pm 1 \).
	
	For each \( x_n \) satisfying \( x_n^{2n-4} = 1 \), the condition \( \varepsilon = x_n^{-(n-2)} \) determines the product of the signs \( \varepsilon_1, \ldots, \varepsilon_{n-1} \). There are \( 2^{n-2} \) choices of signs \( (\varepsilon_1, \ldots, \varepsilon_{n-1}) \in \{ \pm 1 \}^{n-1} \) with fixed product \( \varepsilon \). Since there are \( 2n-4 \) choices for \( x_n \), the total number of solutions is
	\[
	|V(I_{L_n})| = (2n-4) \cdot 2^{n-2} = 2^{n-1}(n-2) = c_n - 1.
	\]	
	
	In both cases, \( |V(I_{L_n})| = c_n -1 \), as shown in Proposition~\ref{prop:colength}. Since \( V(I_{L_n}) \) has finitely many points and \( I_{L_n} \) is radical, we conclude that \( \dim_F R_n / I_{L_n} = |V(I_{L_n})| = c_n -1 < \infty \). Hence, \( I_{L_n} \) is zero-dimensional.
	
\end{proof}

We now describe how the lattice ideal \( I_{L_n} \) relates to the defining ideal \( I_n \) of the configuration. 

\begin{prop}
	\label{prop:relation_In_ILn}
	Let \( R_n = F[x_1, \dots, x_n] \) and let \(  \mathfrak{m} = (x_1, \dots, x_n) \) denote the maximal ideal at the origin. 
	Let \( I_n \subset R_n \) be the square–free binomial ideal defined in \eqref{ab:n1}, and let \( I_{L_n} \subset R_n \) be the lattice ideal generated by the binomials \( f_i \) in \eqref{eq:main}. 
	Then \( I_{L_n} \) is the radical component of \( I_n \) away from the origin, and, in fact,\[
	I_n \;=\; I_{L_n} \;\cap\;  \mathfrak{m}.
	\]
\end{prop}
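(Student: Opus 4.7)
\noindent\emph{Plan.} I would establish the two inclusions separately. The inclusion $I_n \subseteq I_{L_n}\cap\mathfrak m$ is formal: each generator $\prod_{j\neq i}x_j - x_i$ of $I_n$ is, up to sign, one of the first $n$ generators of $I_{L_n}$ and manifestly has no constant term, hence lies in $\mathfrak m$. The substance of the proposition is therefore the reverse inclusion.

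For $I_{L_n}\cap\mathfrak m \subseteq I_n$ I would reduce to a set-theoretic statement via radicality. The ideal $I_n$ is radical (a reduced complete intersection, as recorded in \cite{Udoetal2025} and recalled in the introduction), $I_{L_n}$ is radical by Proposition~\ref{prop:radical_ILn}, and $\mathfrak m$ is maximal; hence $I_{L_n}\cap\mathfrak m$ is radical as well. Since $F$ is algebraically closed, the Nullstellensatz gives
\[
I_n = I(V(I_n)), \qquad I_{L_n}\cap\mathfrak m = I(V(I_{L_n}))\cap I(\{0\}) = I(V(I_{L_n})\cup\{0\}),
\]
so the proposition reduces to the set equality $V(I_n)=V(I_{L_n})\cup\{0\}$. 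The inclusion $\supseteq$ is immediate since the generators $x_i-\prod_{j\neq i}x_j$ of $I_n$ vanish on $V(I_{L_n})$ and at the origin.

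For the nontrivial inclusion $V(I_n)\subseteq V(I_{L_n})\cup\{0\}$, I would take $p=(p_1,\ldots,p_n)\in V(I_n)$, so $p_i = \prod_{j\neq i}p_j$ for each $i$, and multiply each relation by $p_i$ to obtain the uniform identity
\[
p_i^2 \;=\; \prod_{j=1}^n p_j \;=:\; P \qquad (1\le i\le n).
\]
If $P=0$ then all $p_i=0$ and $p$ is the origin. Otherwise every $p_i$ is nonzero; multiplying the $n$ equalities $p_i^2=P$ yields $P^n = P^2$, so $P^{n-2}=1$. Since $P=p_n^2$ this is exactly $p_n^{2n-4}=1$, and the equalities $p_i^2=p_n^2$ for $1\le i\le n-1$ match the remaining generators $x_i^2 - x_n^{2H(n-3)}$ of $I_{L_n}$, the Heaviside factor correctly accommodating the degenerate case $n=3$ where $P^{n-2}=P=1$ forces $p_n^2=1$ and the generator reads $x_i^2-1$. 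Hence $p\in V(I_{L_n})$.

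The main obstacle is really only the bookkeeping in this last step: extracting the higher-degree relations $x_n^{2n-4}=1$ and $x_i^2=x_n^{2H(n-3)}$ of $I_{L_n}$ from the lower-degree defining relations of $I_n$ together with the assumption $p\neq 0$, and checking that the Heaviside convention correctly handles $n=3$. Once the set-theoretic identification of $V(I_n)$ is in place, the passage back to ideals is automatic from the triple radicality and Nullstellensatz.
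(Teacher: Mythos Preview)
Your proof is correct and follows the same strategy as the paper: both reduce the ideal equality to the set-theoretic identity $V(I_n)=V(I_{L_n})\cup\{0\}$ via radicality of all three ideals and the Nullstellensatz, then invoke primary decomposition. The only difference is that where the paper establishes the variety equality by citing the point counts $|V(I_n)|=c_n$ and $|V(I_{L_n})|=c_n-1$ from Propositions~\ref{prop:colength} and~\ref{prop:radical_ILn}, you give a direct and self-contained computation showing that any nonzero $p\in V(I_n)$ satisfies the additional generators of $I_{L_n}$; your argument via $p_i^2=P$ and $P^{n-2}=1$ is essentially the algebraic content underlying those earlier point counts, made explicit here.
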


\begin{proof}
	The variety computations show that the zero set \( V(I_n) \) consists of the origin together with the finite set of points parametrised by the lattice construction. From Proposition~\ref{prop:colength}, we have \( |V(I_n)| = c_n = 1 + (n-2)2^{\,n-1} \) points for \( I_n \), while the lattice ideal \( I_{L_n} \) cuts out the same family of nonzero solutions and satisfies \( |V(I_{L_n})| = c_n - 1 \), the origin being excluded. 
	Hence
	\[
	V(I_n) = V(I_{L_n}) \cup \{0\}.
	\]
	Both ideals are generated by binomials without monomial factors and are shown to be radical and zero-dimensional. Therefore, the ideals of these varieties are intersections of their reduced primary components. 
	Since \( V(I_{L_n}) \) is exactly the complement of the origin in \( V(I_n) \), the only additional reduced component of \( I_n \) is the maximal ideal \( \mathfrak{m} \) of the origin. 
	Consequently, the radical primary decomposition of \( I_n \) is
	\[
	I_n = I_{L_n} \cap  \mathfrak{m}.
	\]
	Finally, since both \( I_n \) and \( I_{L_n} \) are already radical, no embedded nilpotent components appear, and hence the equality holds as an equality of ideals, not just radicals
	\[
	I_n = I_{L_n} \cap  \mathfrak{m}.
	\]
	
\end{proof}

\begin{remark}
	(i) The equality is consistent with the length computation 
\(\operatorname{len}(R_n/I_n) = c_n = \operatorname{len}(R_n/I_{L_n}) + 1\), 
the extra \(1\) corresponding to the origin. 
(ii) Equivalently, \( I_{L_n} \) is the saturation of \( I_n \) away from \(  \mathfrak{m} \):
\[
I_{L_n} = (I_n :  \mathfrak{m}^{\infty}).
\]
\end{remark}

We now present our main result in this section, following the discussion in \citep[Sect.~7.1, Theorems~7.3-7.4]{Miller-Sturmfels2005}.

\begin{theorem}\label{thm:lattice-algebras}
	Let \( L_n \subseteq \Z^n \) be a lattice generated by the set of vectors in \eqref{eqn:s5n1}.  Let \( I_{L_n} \subset R_n \) be the associated lattice ideal. Then there is a canonical \( F \)-algebra isomorphism
	\[
	R_n / I_{L_n} \cong F[\mathbb{Z}^n / L_n].
	\] In other words, if the associated lattice ideal \( I_{L_n} \) in \( R_n \)  is of finite codimension, then its length is the same as the order of the group \( \Z^n / L_n \). In this case, \[ \text{len} (R_n/I_{L_n}) = | G_n |. \]
\end{theorem}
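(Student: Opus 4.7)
The plan is to build the canonical $F$-algebra map
\[
\phi : R_n \longrightarrow F[\mathbb{Z}^n/L_n], \qquad x_i \longmapsto [e_i + L_n],
\]
check that $I_{L_n}\subseteq \ker\phi$, and then show that the induced map
$\bar\phi:R_n/I_{L_n}\to F[\mathbb{Z}^n/L_n]$ is bijective by a dimension count using the previous propositions. Well-definedness on the quotient is immediate from the construction of $L_n$: every generator of $I_{L_n}$ listed in \eqref{eq:main} has the form $x^a - x^b$ with $a - b$ equal to one of the generators \eqref{eqn:s5n1} of $L_n$, so $\phi(x^a - x^b) = [a+L_n] - [b+L_n] = 0$. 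Hence $\phi$ descends to a canonical $F$-algebra homomorphism $\bar\phi$.

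The central step is surjectivity of $\bar\phi$. Because $\mathbb{Z}^n/L_n \cong G_n$ is finite by Proposition~\ref{prop:isomorphism-btw-Gn-quotient-Ln}, each generator $[e_i+L_n]$ has some finite order $m_i$, so $m_i e_i \in L_n$ and consequently $x_i^{m_i} \equiv 1 \pmod{I_{L_n}}$. Thus $x_i$ becomes a unit in $R_n/I_{L_n}$ with inverse $x_i^{m_i-1}$, and $\phi(x_i^{m_i-1}) = [-e_i + L_n]$. Since $F[\mathbb{Z}^n/L_n]$ is $F$-spanned by the classes $[g+L_n]$ with $g\in\mathbb{Z}^n$ — each of which is a monomial product of $[\pm e_i + L_n]$'s — every such class lies in the image of $\bar\phi$, giving surjectivity.

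To promote surjectivity to an isomorphism I would match dimensions. By Proposition~\ref{prop:radical_ILn}, $I_{L_n}$ is radical and zero-dimensional with $|V(I_{L_n})| = c_n - 1$, so the Nullstellensatz yields $\dim_F R_n/I_{L_n} = c_n - 1$. On the other side, Propositions~\ref{prop:isomorphism-btw-Gn-quotient-Ln} and~\ref{prop:structure_Gn} give $\dim_F F[\mathbb{Z}^n/L_n] = |G_n| = (n-2)\,2^{n-1} = c_n - 1$. A surjective $F$-linear map between finite-dimensional $F$-vector spaces of equal dimension is an isomorphism, so $\bar\phi$ is the asserted canonical isomorphism of $F$-algebras. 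The length equality is then automatic, since the Artinian quotient $R_n/I_{L_n}$ has length equal to its $F$-dimension, yielding $\text{len}(R_n/I_{L_n}) = |G_n|$. The only real subtlety — the main obstacle, such as it is — lies in establishing surjectivity: one must explain how polynomial powers of $x_i$ alone can represent both $[e_i+L_n]$ and $[-e_i+L_n]$, and this is handled cleanly through the finite-order relations $x_i^{m_i}\equiv 1 \pmod{I_{L_n}}$ rather than by any ad hoc coset manipulation.
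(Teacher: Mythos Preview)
Your argument is correct and follows essentially the same route as the paper: define the canonical map $x_i\mapsto [e_i+L_n]$, check it kills the listed binomial generators, establish surjectivity, and conclude by a dimension count using Propositions~\ref{prop:radical_ILn} and~\ref{prop:isomorphism-btw-Gn-quotient-Ln}. If anything, your write-up is a bit more explicit than the paper's in two places---you spell out why polynomial images already hit all group elements (via the finite-order relations $x_i^{m_i}\equiv 1$), and you invoke the Nullstellensatz to pin down $\dim_F R_n/I_{L_n}=c_n-1$ rather than leaving that step implicit.
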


\begin{proof}
	By Proposition \ref{prop:isomorphism-btw-Gn-quotient-Ln}, \(  G_n \cong \mathbb{Z}^n / L_n \) with independent generators \( h_1, \dots, h_{n-2} \) of order 2 and a generator \( t\) of order \(2n-4\) defined by 
	
	$$
	g_i=h_i\,t\quad(1\le i\le n-2),\qquad g_{n-1}=(h_1\cdots h_{n-2})\,t^{\,n-1},\qquad g_n=t.
	$$ The group algebra $F[\mathbb{Z}^n/L_n]$ is thus the commutative $F$-algebra of dimension \( |\Z^n /L_n| = c_n -1 \) generated by $u_{h_1},\dots, u_{h_{n-2}}, u_t$ with the obvious group relations
	
	$$
	u_{h_i}^2=1,\qquad u_t^{\,2n-4}=1,\qquad u_{h_i}u_t=u_t u_{h_i},
	$$
	and all generators commute.
	
	Define a map \( \Phi: R_n \to F[\mathbb{Z}^n/L_n] \) by  
	\[
	\Phi(x_i) = u_{g_i}, \quad \text{for } i = 1, \dots, n,
	\]  
	where \( g_i = \bar{e}_i \in \mathbb{Z}^n / L_n  \) is the image of \( e_i \) in \( \mathbb{Z}^n / L_n  \).  Expressed in terms of the chosen generators \( h_j\) and \(t\) the explicit image of each \(x_i\) is 	
	$$
	\Phi(x_i)=u_{h_i}\,u_t \quad(1\le i\le n-2),\qquad \Phi(x_{n-1}) \;=\; u_{h_1}\,u_{h_2}\cdots u_{h_{n-2}}\,u_t^{\,n-1}, \qquad \Phi(x_n)=u_t. 
	$$
	
	Extend \( \Phi \) linearly and multiplicatively to all of \( R_n \). This map is a canonical \( F \)-algebra homomorphism by construction: it preserves addition due to linearity, multiplication due to the identity \( u_g u_{g'} = u_{gg'} \) induced by the usual group-algebra convolution, and the unit since \( \Phi(1) = u_1 \).
	
	We now verify that \( \Phi \) vanishes on the generators of \( I_{L_n} \), so it descends to a well-defined map \( \overline{\Phi}: R_n / I_{L_n} \to F[\mathbb{Z}^n / L_n] \). 
	
	The defining binomials of $I_{L_n}$ are precisely the relations in the quotient group $\mathbb{Z}^n/L_n$: the relation \( \bar{e}_i = \sum_{j \neq i} \bar{e}_j \) in \( \mathbb{Z}^n / L_n \) implies \( u_{g_i} = \prod_{j \neq i} u_{g_j} \), so \( \Phi(f_i) = 0 \); the relation \( (2n-4)\bar{e}_n = 0 \) implies \( u_t^{2n-4} = 1 \), so \( \Phi(f_{n+1}) = 0 \); and the relations \( 2\bar{e}_i = 2H(n-3)\bar{e}_n \) imply \( u_{g_i}^2 = u_t^{2H(n-3)} \), so \( \Phi(f_{i+n+1}) = 0 \). Thus every generator of $I_{L_n}$ maps to zero, so $\Phi(I_{L_n})=0$. Consequently $\Phi$ induces a well-defined homomorphism
	
	$$
	\overline{\Phi}:R_n/I_{L_n}\;\longrightarrow\;F[\mathbb{Z}^n/L_n].
	$$
	
	The induced map is surjective because the \( u_{g_i} \) generate the group algebra. Since \(\mathbb{Z}^n / L_n\) is a finite abelian group, the group algebra \(F[\mathbb{Z}^n / L_n]\) is a finite-dimensional vector space over \(F\). Its dimension is exactly \(|\mathbb{Z}^n / L_n|\). Moreover, because \(I_{L_n}\) is radical and zero dimensional (Proposition~\ref{prop:radical_ILn}), the ring \(R_n/I_{L_n}\) is an Artinian reduced ring, and its length equals its dimension as a \(F\)-vector space. Therefore, \( \overline{\Phi }\) is injective. Hence, \( \overline{\Phi} \) is an isomorphism of \( F \)-algebras, and \[
	\operatorname{len}(R_n / I_{L_n}) = \dim_F (R_n/I_{L_n}) = | \mathbb{Z}^n / L_n | = \dim_F F[\mathbb{Z}^n / L_n] = |G_n|.
	\]
    
\end{proof}

\begin{proof}[Conclusion of the proof of Proposition~\ref{prop:cycle-structure}]
	By Theorem~\ref{thm:lattice-algebras}, we have that the lattice ideal \( I_{L_n} \subset R_n = F[x_1,\dots,x_n] \) admits the canonical identification  
	\[
	R_n / I_{L_n} \;\cong\; F[\mathbb{Z}^n / L_n] \;\cong\; F[G_n],
	\]
	where \( G_n \cong \mathbb{Z}^n / L_n \) is the associated finite abelian group.  
	Under this isomorphism, each monomial class in \( R_n / I_{L_n} \) corresponds naturally to a basis element of the group algebra \( F[G_n] \).  
	In particular, the action of multiplication by the variable \( x_j \) is precisely the action of left multiplication by the group element 
	\[
	g_j := \overline{e_j} \in G_n,
	\]
	the image of the \( j \)-th standard basis vector of \( \mathbb{Z}^n \).  
	Hence, the permutation matrix \( P_j \) representing \( m_{x_j} \) is exactly the matrix of the regular representation of \( g_j \) on \( F[G_n] \).
	
	This identification allows the study of \( P_j \) to be reduced to understanding the left regular action of \( g_j \) on the finite group \( G_n \); the orbit structure of this action corresponds precisely to the coset decomposition of \( G_n \) modulo the cyclic subgroup \( \langle g_j \rangle \).
	
	By Proposition~\ref{prop:structure_Gn}, the group \( G_n \) decomposes as \( G_n \;\cong\; C_2^{\,n-2} \times C_{2n-4}\), with commuting generators that may be chosen so that $g_j=h_j\,t$ for $1\le j\le n-1$, where $h_j$ has order $2$ and $t$ has order $2n-4$. Since these components commute, the order of $g_j$ equals the order of the product $h_j t$, i.e., $\operatorname{ord}(g_j)=\operatorname{lcm}(2,2n-4)=2n-4=2(n-2)$. 
	Left multiplication by \( g_j \) thus partitions \( G_n \) into cosets of the cyclic subgroup  \( \langle g_j \rangle \), each forming an orbit of length $\operatorname{ord}(g_j)=2(n-2)$.  
	As \(|G_n| = (n-2)2^{\,n-1}\) (Proposition~\ref{prop:structure_Gn}), the number of distinct cosets, and hence the number of disjoint cycles in the permutation induced by \( g_j \), is
	\[
	\frac{|G_n|}{\operatorname{ord}(g_j)} = \frac{(n-2)2^{\,n-1}}{2(n-2)} = 2^{\,n-2}.
	\]
	Therefore,  \( \sigma_j \) decomposes into exactly \( 2^{n-2} \) cycles each of length \( 2(n-2) \), giving
	\[
	\operatorname{Ord}(P_j) = 2(n-2)
	\quad\text{and}\quad
	\chi_{P_j}(t) = \big(t^{2(n-2)} - 1\big)^{2^{\,n-2}},
	\]
	as claimed.
\end{proof}

\newpage
\appendix
\renewcommand{\thesection}{A} 
\refstepcounter{section}
\section*{Appendix A: Sage Implementation}
\phantomsection
\label{sec:appendix-1}
\IfFileExists{companion.sage}{
	\VerbatimInput[fontsize=\small]{companion.sage}
}{
	\begin{verbatim}
	% companion.sage file not found
	% Sage code would appear here if available
	from itertools import product
	
	def monomials_up_to_degree(gens, d):
	"""
	Generate all monomials in gens of total degree <= d.
	"""
	n = len(gens)
	monoms = []
	for exps in product(range(d+1), repeat=n):
	if sum(exps) <= d:
	monoms.append(prod(g**e for g, e in zip(gens, exps)))
	return monoms
	
	def compute_basis(R, G, gens, max_degree=10):
	"""
	Automatically compute a basis for the quotient R/I
	by reducing monomials until the basis stabilizes.
	"""
	B = []
	prev_len = -1
	d = 0
	while d <= max_degree:
	monoms = monomials_up_to_degree(gens, d)
	for m in monoms:
	p = m.reduce(G)
	if p != 0 and p not in B:
	B.append(p)
	if len(B) == prev_len:   # basis did not grow
	break
	prev_len = len(B)
	d += 1
	return B
	
	def companion_matrices(n, max_degree=10):
	"""
	Construct multiplication matrices for the quotient ring
	Q = QQ[x1,...,xn]/I where
	I = (product of all vars except xi - xi).
	
	Automatically finds the basis by increasing degree until stable.
	"""
	# Polynomial ring in n variables
	varnames = [f"x{i+1}" for i in range(n)]
	R = PolynomialRing(QQ, varnames, order='lex')
	gens = R.gens()
	
	# Build ideal I
	Igens = []
	for i in range(n):
	prod_except_i = prod(gens[j] for j in range(n) if j != i)
	Igens.append(prod_except_i - gens[i])
	I = R.ideal(Igens)
	
	# Groebner basis
	G = I.groebner_basis()
	
	# Automatically find basis
	B = compute_basis(R, G, gens, max_degree=max_degree)
	
	print(f"\nBasis B for n={n}:")
	print(B)
	
	# Multiplication matrix builder
	def mult_matrix(v, B, G):
	msize = len(B)
	M = matrix(QQ, msize)
	for j, bj in enumerate(B):
	p = (v*bj).reduce(G)
	coeffs = [p.monomial_coefficient(b) for b in B]
	M[:, j] = vector(coeffs)
	return M
	
	# Compute matrices for all variables
	matrices = {str(g): mult_matrix(g, B, G) for g in gens}
	
	return B, matrices
	
	# === Interactive Run ===
	try:
	n = int(input("Enter number of variables n: "))
	except:
	n = 3  # fallback default
	
	B, matrices = companion_matrices(n)
	
	for v, M in matrices.items():
	print(f"\nMatrix for {v}:\n{M}")
	\end{verbatim}
}

\vspace{2em}
\noindent
\textsc{University of Calabar, Nigeria} \\
\textit{Email address:} \texttt{nsibietudo@unical.edu.ng}

\vspace{1em}
\noindent
\textsc{University of Ibadan, Nigeria} \\
\textit{Email address:} \texttt{ph.adeyemo@ui.edu.ng}

\end{document}